\newtheorem{thm}{Theorem}[section]
\newtheorem{prop}[thm]{Proposition}
\newtheorem{lemma}[thm]{Lemma}
\newtheorem{cor}[thm]{Corollary}
\newtheorem{example}[thm]{Example}
\newtheorem{remark}[thm]{Remark}
\numberwithin{equation}{section}
\begin{document}{\allowdisplaybreaks[4]

\title[Classical aspects of quantum cohomology of   flag varieties]{Classical aspects of quantum cohomology of generalized  flag varieties}

\author{Naichung Conan Leung}
\address{The Institute of Mathematical Sciences and Department of Mathematics,
           The Chinese University of Hong Kong, Shatin, Hong Kong}
\email{leung@math.cuhk.edu.hk}
\thanks{ 
 }

\author{Changzheng Li}
\address{School of Mathematics, Korea Institute for Advanced Study, 87
Hoegiro, Dongdaemun-gu, Seoul, 130-722, Korea}
\email{czli@kias.re.kr}

\date{
      }


\keywords{Gromov-Witten invariants. Quantum cohomology. Flag varieties.}


\begin{abstract}
We show that various genus zero  Gromov-Witten invariants for flag
varieties representing different homology classes are indeed the
same. 
In particular, many of them are classical intersection numbers of
Schubert cycles.
\end{abstract}

\maketitle

\section{Introduction}
A  generalized  flag variety $G/P$ is the quotient of a
simply-connected complex simple Lie group $G$ 
 by a parabolic subgroup $P$ of $G$.
   The (small) quantum cohomology ring $QH^*(G/P)$ of $G/P$      is
    a deformation of the ring structure on  $H^*(G/P)$ by incorporating three-pointed, genus zero Gromov-Witten invariants of $G/P$.
    The presentation of the ring structure on $QH^*(G/P)$ in special cases have been studied by many mathematicians (see e.g. \cite{sietian},
         \cite{kim}, \cite{CFon}, \cite{BKT2}, \cite{chper}, \cite{peterson}, \cite{lamshi}, \cite{fu11} and references therein).
    From the viewpoint  of enumerative geometry, it is desirable to have (positive) combinatorial formulas for these Gromov-Witten invariants.
 Thanks to the Peterson-Woodward comparison formula \cite{wo}, all these Gromov-Witten invariants for $G/P$ can  be recovered from  the
 Gromov-Witten invariants for the special case of a complete flag variety $G/B$, where $B$ is a Borel subgroup. 

In \cite{leungli33}, with the help of the Peterson-Woodward comparison
formula, we established a natural  filtered algebra structure
on $QH^{\ast}\left(  G/B\right)$. In this article, we use the structures of
this filtration to obtain relationships among three-pointed genus zero Gromov-Witten invariants $N_{u, v}^{w, \lambda}$ for $G/B$.
The Gromov-Witten invariants $N_{u, v}^{w, \lambda}$ are  the {structure coefficients}  of the quantum product
     $$\sigma^u\star \sigma^v=\sum_{\lambda \in H_2(G/B, \mathbb{Z}), w}N_{u, v}^{w, \lambda}\mathbf{q}_\lambda\sigma^w$$
  of the Schubert cocycles $\sigma^u$ and $\sigma^v$ in the quantum cohomology $QH^*(G/B)$.
    The evaluation  of $\mathbf{q}$ at the origin gives us the classical intersection product
            $$\sigma^u\cup\sigma^v=\sum_wN_{u, v}^{w, 0}\sigma^w.$$
\noindent Let $\Delta=\{\alpha_1, \cdots,\alpha_n\}$ be a base of simple roots of $G$ and $\{\alpha_1^\vee, \cdots, \alpha_n^\vee\}$ be the simple coroots (see section \ref{prelimiar}
and references therein for more details of the notations).
 The Weyl group $W$ is a Coxeter group generated by simple reflections $\{s_\alpha~|~\alpha\in \Delta\}$. For each $\alpha\in \Delta$, we
 introduce a map $\mbox{sgn}_\alpha: W\rightarrow \{0, 1\}$ defined by $\mbox{sgn}_\alpha(w):=1$ if
  $\ell(w)-\ell(ws_\alpha)>0$, and $0$ otherwise. Here $\ell: W\rightarrow \mathbb{Z}_{\geq 0}$ denotes the   length function.
 Let   $\mathfrak{h}$ be the dual of the vector space $\mathfrak{h}^*:=\oplus_{\alpha\in \Delta}\mathbb{C}\alpha$ and
 $\langle \cdot, \cdot \rangle: \mathfrak{h}^*\times \mathfrak{h}\rightarrow \mathbb{C}$ be the natural pairing.
  Note that $H_2(G/B, \mathbb{Z})$ can be canonically identified with the coroot lattice $Q^\vee:=\oplus_{\alpha\in \Delta}\mathbb{Z}\alpha^\vee \subset \mathfrak{h}$.
 We prove
\bigskip

\begin{thm}\label{mainthm}
   For any $u, v, w\in W$ and for any $\lambda\in Q^\vee$, we have
   \begin{enumerate}
      \item   $N_{u, v}^{w, \lambda}=0$  unless  {\upshape $\mbox{sgn}_\alpha(w)+\langle \alpha, \lambda\rangle \leq \mbox{sgn}_\alpha(u)+\mbox{sgn}_\alpha(v)$} for all $\alpha\in \Delta.$
     \item  Suppose    {\upshape $ \mbox{sgn}_\alpha(w)+\langle \alpha, \lambda\rangle =\mbox{sgn}_\alpha(u)+\mbox{sgn}_\alpha(v)=2$} for some  $\alpha\in \Delta$, then
           {\upshape  $$N_{u, v}^{w, \lambda}=N_{us_\alpha, vs_\alpha}^{w, \lambda-\alpha^\vee}=
                 \begin{cases} N_{u, vs_\alpha}^{ws_\alpha, \lambda-\alpha^\vee}, &   i\!f    \mbox{ sgn}_\alpha(w)=0 \\
                               \vspace{-0.3cm}   & \\
                         N_{u, vs_\alpha}^{ws_\alpha, \lambda}, & i\!f   \mbox{ sgn}_\alpha(w)=1 \,\, {}_{\displaystyle .}  \end{cases}$$
            }
   \end{enumerate}

\end{thm}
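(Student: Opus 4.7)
The plan is to apply the filtered algebra structure on $QH^*(G/B)$ constructed in \cite{leungli33}. For each simple root $\alpha\in\Delta$, that paper equips $QH^*(G/B)$ with an $\alpha$-filtration in which $\mathbf{q}_\lambda\sigma^w$ is assigned the degree $\mathrm{sgn}_\alpha(w)+\langle\alpha,\lambda\rangle$, and the quantum product is compatible with this filtration: $\deg_\alpha(x\star y)\leq\deg_\alpha(x)+\deg_\alpha(y)$. Part (1) is then immediate, because $\deg_\alpha(\sigma^u)=\mathrm{sgn}_\alpha(u)$ and $\deg_\alpha(\sigma^v)=\mathrm{sgn}_\alpha(v)$, so every basis term $\mathbf{q}_\lambda\sigma^w$ with nonzero coefficient in $\sigma^u\star\sigma^v$ satisfies the asserted inequality for each $\alpha\in\Delta$.

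For part (2), the strategy is to work on the top piece of the $\alpha$-filtration. The hypothesis $\mathrm{sgn}_\alpha(u)=\mathrm{sgn}_\alpha(v)=1$ puts $\sigma^u,\sigma^v$ at top $\alpha$-degree $1$, and the target $\mathbf{q}_\lambda\sigma^w$ sits at top $\alpha$-degree $2$. Using the \emph{quantum Leray--Hirsch} style description of $\mathrm{gr}^\alpha QH^*(G/B)$ expected from \cite{leungli33}, there is a fiber class $t$ of $\alpha$-degree $1$ with $t^2\equiv\mathbf{q}_{\alpha^\vee}$ modulo lower degree, and one has $\sigma^u\equiv t\cdot\sigma^{us_\alpha}$, $\sigma^v\equiv t\cdot\sigma^{vs_\alpha}$ modulo lower $\alpha$-degree, while $t\cdot\sigma^x\equiv\sigma^{xs_\alpha}$ whenever $\mathrm{sgn}_\alpha(x)=0$. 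The first equality then follows from
\[
\sigma^u\star\sigma^v\equiv t^2\cdot(\sigma^{us_\alpha}\star\sigma^{vs_\alpha})\equiv\mathbf{q}_{\alpha^\vee}\cdot(\sigma^{us_\alpha}\star\sigma^{vs_\alpha})
\]
by matching the coefficient of $\mathbf{q}_\lambda\sigma^w$ on each side.

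The second pair of equalities comes from the asymmetric factorization $\sigma^u\star\sigma^v\equiv t\cdot(\sigma^u\star\sigma^{vs_\alpha})$. Extracting the coefficient of $\mathbf{q}_\lambda\sigma^w$ on the right splits into two subcases. If $\mathrm{sgn}_\alpha(w)=1$, then $\mathbf{q}_\lambda\sigma^w\equiv t\cdot(\mathbf{q}_\lambda\sigma^{ws_\alpha})$ directly, which yields $N_{u,v}^{w,\lambda}=N_{u,vs_\alpha}^{ws_\alpha,\lambda}$. If $\mathrm{sgn}_\alpha(w)=0$, then one first uses $t^2\equiv\mathbf{q}_{\alpha^\vee}$ to write $\mathbf{q}_\lambda\sigma^w\equiv t^2\cdot(\mathbf{q}_{\lambda-\alpha^\vee}\sigma^w)\equiv t\cdot(\mathbf{q}_{\lambda-\alpha^\vee}\sigma^{ws_\alpha})$, which yields $N_{u,v}^{w,\lambda}=N_{u,vs_\alpha}^{ws_\alpha,\lambda-\alpha^\vee}$.

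The main obstacle will be calibrating the quantum Leray--Hirsch identification on $\mathrm{gr}^\alpha QH^*(G/B)$ so that the Schubert bases match without stray scalar factors and the relation $t^2\equiv\mathbf{q}_{\alpha^\vee}$ holds exactly, so that the structure constants match integrally and not merely up to proportionality. This normalization should follow from the quantum Chevalley formula for $\sigma^{s_\alpha}$ combined with the Peterson--Woodward comparison applied to the minimal parabolic $P_\alpha$.
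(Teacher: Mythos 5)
Your proposal is correct and follows essentially the same route as the paper: part (1) via the $\alpha$-filtered algebra structure of \cite{leungli33}, and part (2) by working in the associated graded algebra, where the fiber class $t=\overline{\sigma^{s_\alpha}}$ of the $\mathbb{P}^1$-fibration $G/B\to G/P_\alpha$ satisfies $t^2=\overline{q_{\alpha^\vee}}$ and $\overline{\sigma^{u}}=t\star\overline{\sigma^{us_\alpha}}$, then matching coefficients for the symmetric factorization $t^2\cdot(\sigma^{us_\alpha}\star\sigma^{vs_\alpha})$ and the asymmetric one $t\cdot(\sigma^{u}\star\sigma^{vs_\alpha})$. The ``quantum Leray--Hirsch'' normalization you flag as the main obstacle is exactly what Theorems 1.2 and 1.4 and Proposition 3.23 of \cite{leungli33} supply (via the maps $\Psi_{\scriptsize\mbox{vert}}^\alpha$, $\Psi_{\scriptsize\mbox{hor}}^\alpha$ and the Peterson--Woodward lift $\psi_\alpha$), just as you anticipate.
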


We obtain nice applications of the above theorem which demonstrate the
so-called \textquotedblleft\textit{quantum to classical}" principle.

The ``quantum to classical" principle says that certain
three-pointed genus zero Gromov-Witten invariants for a given homogeneous space are classical intersection
numbers for a typically different   homogeneous space. This phenomenon, probably for the first time, occurred in the proof
of quantum Pieri rule for partial flag varieties of type $A$ by
Ciocan-Fontanine \cite{CFon}, and later occurred in the elementary proof of
quantum Pieri rule for complex Grassmannians by Buch \cite{buch} as well as the work \cite{KreschTamvakisLagran}, \cite{KreschTamvakisortho} of Kresch and Tamvakis on Lagrangian and orthogonal Grassmannians. The phrase
``quantum to classical principle" was introduced  by Chaput and Perrin \cite{chper} for the work \cite{BKT1} of  Buch, Kresch and Tamvakis
   on  complex Grassmannians, Lagrangian Grassmannians and
orthogonal Grassmannians, which says that  any three-pointed genus zero
Gromov-Witten invariant on a Grassmannian of aforementioned types is equal to
a classical intersection number on  a partial flag variety of the same Lie
type. Recently,   this principle has been developed by  Buch, Kresch and Tamvakis  for isotropic Grassmannians
of classical types \cite{BKT2}.  For Grassmannians of certain exceptional
types, this principle has also been studied by  Chaput, Manivel and Perrin
(\cite{cmn}, \cite{chper}).  For flag varieties of $A$-type, there are
relevant studies  by Coskun \cite{coskun33}. For the special case of computing the number of lines   in a general complete variety $G/B$, this principle has
also been studied earlier  by the second author and Mihalcea \cite{liMihal}.
In addition, we note that this principle for certain $K$-theoretic
Gromov-Witten invariants
have been studied by Buch-Mihalcea (\cite{buchMihalcea},
\cite{buchMihalcea22}) and by Li-Mihalcea \cite{liMihal}.

Using Theorem \ref{mainthm}, we not only recover most of the above results on
the ``quantum to classical" principle,  but also get new and interesting
results. Further applications for general Lie types other than type $A$  will
be treated in  \cite{leungli44}, \cite{leungli55}.  For
instance in \cite{leungli44}, we could see the applications of Theorem \ref{mainthm} in seeking  quantum Pieri rules with respect
to Chern classes of the dual of the tautological subbundles for Grassmannians
of classical types,  which are not covered in \cite{BKT2} in general.

For type $A_{n}$ case, we note that the Weyl group $W$ is canonically
isomorphic to the permutation group $S_{n+1}$  and $G/B=F\ell_{n+1}%
=\{V_{1}\leqslant\cdots\leqslant V_{n}\leqslant\mathbb{C}^{n+1}~|~
\dim_{\mathbb{C}}V_{j}=j, j=1, \cdots, n\}$. An element $u\in W=S_{n+1}$ is
called a \textit{Grassmannian permutation}, if there exists $1\leq k\leq n$ such
that  $\sigma^{u}\in H^{*}(F\ell_{n+1})$ comes from the pull-back  $\pi^{*}:
H^{*}(Gr(k, n+1))\rightarrow H^{*}(F\ell_{n+1})$ induced from the natural
projection map  $\pi: F\ell_{n+1}\rightarrow\{V_{k}\leqslant\mathbb{C}%
^{n+1}~|~ \dim V_{k}=k\}=Gr(k, n+1)$. Equivalently, a Grassmannian permutation $u\in
W$ is an element such that all the reduced expressions $u=s_{i_{1}}s_{i_{2}%
}\cdots s_{i_{m}}$, where $m=\ell(u)$,  end with the same simple reflection
$s_{k}$. When written in ``one-line" notation, Grassmannian permutations are precisely the permutations with a single descent (see Remark \ref{remdescuvw} for more details).  As an application of Theorem \ref{mainthm}, we have

\begin{thm}\label{mainapplication}
   Let     $u, v, w\in S_{n+1}$ and   $\lambda \in Q^\vee$. If $u$ is of Grassmannian type,  then there exist $v', w'\in S_{n+1}$ such that
     $$N_{u, v}^{w, \lambda}=N_{u, v'}^{w',\, 0}.$$
\end{thm}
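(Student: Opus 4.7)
The plan is to prove Theorem \ref{mainapplication} by induction on the quantum degree $\langle 2\rho, \lambda \rangle$, repeatedly invoking Theorem \ref{mainthm} until the class $\lambda$ is reduced to zero. Throughout, the hypothesis that $u$ is Grassmannian will play a decisive role: $u$ has a unique simple descent $\alpha_k$, so $\mbox{sgn}_{\alpha_k}(u) = 1$ while $\mbox{sgn}_{\alpha_j}(u) = 0$ for every $j \neq k$. This singles out $\alpha_k$ as the only simple root that can serve as the ``reducing direction'' of Theorem \ref{mainthm}(2) without disturbing $u$.

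The base case $\lambda = 0$ is trivial, with $v' = v$ and $w' = w$. For the inductive step, assume $\lambda \neq 0$. If $N_{u,v}^{w,\lambda} = 0$ the conclusion holds vacuously for any choice of $(v', w')$ producing a vanishing classical coefficient, so assume $N_{u,v}^{w,\lambda} \neq 0$. Theorem \ref{mainthm}(1), applied at each $\alpha_j$ with $j \neq k$, yields the concavity-type restriction
\[
\langle \alpha_j, \lambda \rangle \leq \mbox{sgn}_{\alpha_j}(v) - \mbox{sgn}_{\alpha_j}(w) \leq 1,
\]
which tightly constrains the coefficients of $\lambda = \sum_i a_i \alpha_i^\vee$ and confirms that any reduction using Theorem \ref{mainthm}(2) while preserving $u$ must take place at $\alpha_k$.

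Next I would apply Theorem \ref{mainthm}(2) at $\alpha = \alpha_k$ in one of its $u$-preserving forms
\[
N_{u,v}^{w,\lambda} = \begin{cases} N_{u,\, v s_k}^{w s_k,\, \lambda - \alpha_k^\vee}, & \text{if } \mbox{sgn}_{\alpha_k}(w) = 0, \\ N_{u,\, v s_k}^{w s_k,\, \lambda}, & \text{if } \mbox{sgn}_{\alpha_k}(w) = 1. \end{cases}
\]
Each step strictly decreases the lexicographic pair $(\langle 2\rho, \lambda\rangle,\, \ell(v))$: the first form drops the quantum degree by $2$, while the second leaves it fixed but strictly decreases $\ell(v)$ since $\mbox{sgn}_{\alpha_k}(v) = 1$ is forced. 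A double induction on this pair then drives $\lambda$ down to $0$ in finitely many steps, delivering the required $v'$ and $w'$.

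The main obstacle is to show that Theorem \ref{mainthm}(2) is applicable whenever we need it, i.e.\ that for $u$ Grassmannian, $\lambda \neq 0$, and $N_{u,v}^{w,\lambda} \neq 0$, one always has $\mbox{sgn}_{\alpha_k}(v) = 1$ together with $\mbox{sgn}_{\alpha_k}(w) + \langle \alpha_k, \lambda \rangle = 2$. This requires a careful combinatorial analysis combining the constraints from Theorem \ref{mainthm}(1) with the Grassmannian single-descent structure of $u$; in the configurations where the double equality at $\alpha_k$ fails, one must first apply the commutativity symmetry $N_{u,v}^{w,\lambda} = N_{v,u}^{w,\lambda}$ or the $S_3$-symmetry $N_{u,v}^{w,\lambda} = N_{u, w_0 w}^{w_0 v, \lambda}$ of three-point genus-zero Gromov-Witten invariants to bring $(v, w)$ into a form amenable to Theorem \ref{mainthm}(2). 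This delicate case analysis, powered throughout by the fact that every reduced expression of $u$ ends in $s_k$, is the technical heart of the argument.
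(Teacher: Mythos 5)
There is a genuine gap, and it sits exactly where you deferred the work. Your reduction step insists on applying Theorem \ref{mainthm}(2) at the unique descent $\alpha_k$ of $u$, and the whole argument hinges on the claim that whenever $u$ is Grassmannian, $\lambda\neq 0$ and $N_{u,v}^{w,\lambda}\neq 0$, one can arrange $\mbox{sgn}_{\alpha_k}(v)=1$ and $\mbox{sgn}_{\alpha_k}(w)+\langle\alpha_k,\lambda\rangle=2$ (possibly after symmetries). This claim is false. Take $G/B=F\ell_3$, $u=s_1$ (so $k=1$), $v=s_1s_2s_1=w_0$, $w=\mbox{id}$, $\lambda=\alpha_1^\vee+\alpha_2^\vee$: the quantum Chevalley formula gives $N_{u,v}^{w,\lambda}=1$, yet $\mbox{sgn}_{\alpha_1}(w)+\langle\alpha_1,\lambda\rangle=0+1=1\neq 2$, so part (2) is not applicable at $\alpha_k$. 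Moreover the symmetries you invoke do not rescue this configuration: commutativity $N_{u,v}^{w,\lambda}=N_{v,u}^{w,\lambda}$ destroys the property that the first index is Grassmannian (which your induction needs), and the duality symmetry $N_{u,v}^{w,\lambda}=N_{u,w_0w}^{w_0v,\lambda}$ maps this triple to itself, since $w_0w=v$ and $w_0v=w$. So the ``technical heart'' you postponed cannot be carried out as stated, and your constraint from Theorem \ref{mainthm}(1) at $j\neq k$, namely $\langle\alpha_j,\lambda\rangle\leq\mbox{sgn}_{\alpha_j}(v)-\mbox{sgn}_{\alpha_j}(w)$, actually points in the opposite direction: it leaves room for (and in the example forces) a reduction at a root other than $\alpha_k$.

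The paper's proof differs precisely here. It first shows (Lemma \ref{lemmapositivecoe}) that an effective nonzero $\lambda=\sum_j a_j\alpha_j^\vee$ admits some $m$ with $\langle\alpha_m,\lambda\rangle>0$ and $a_m>0$. If such $m$ can be chosen with $m\neq k$, then $\mbox{sgn}_{\alpha_m}(u)=0$ and nonvanishing forces $\mbox{sgn}_{\alpha_m}(v)=1$, $\mbox{sgn}_{\alpha_m}(w)=0$, $\langle\alpha_m,\lambda\rangle=1$; although part (2) does not apply directly to $(u,v,w,\lambda)$ at $\alpha_m$ (the common value is $1$, not $2$), one applies it to the auxiliary invariant $N_{us_m,v}^{ws_m,\lambda}$, for which both sums equal $2$, and the two equalities of part (2) identify this auxiliary invariant simultaneously with $N_{u,v}^{w,\lambda}$ and with $N_{u,vs_m}^{ws_m,\lambda-\alpha_m^\vee}$, giving the reduction at $\alpha_m\neq\alpha_k$ while keeping $u$ in the first slot. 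Only when $m$ is unique does the argument reduce at $\alpha_k$, and there Lemma \ref{lemmacoelargethan2} ($\langle\alpha_m,\lambda\rangle\geq 2$) combined with $\mbox{sgn}_{\alpha_j}(u)=\delta_{jk}$ forces $m=k$ and the ``$=2$'' hypothesis, so part (2) applies directly. The induction is then on $|\lambda|=\sum_j a_j$. In short: your plan to confine all reductions to $\alpha_k$ and to repair the failing cases by $S_3$-symmetries does not work; the missing ingredient is the indirect use of Theorem \ref{mainthm}(2) through the shifted invariant $N_{us_m,v}^{ws_m,\lambda}$ at roots $m\neq k$, together with the two combinatorial lemmas on effective coroots.
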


\noindent It is also interesting to investigate the above theorem from the point of view of symmetries  on
$QH^{*}(F\ell_{n+1})$, analogous with   the cyclic symmetries shown by
Postnikov \cite{postnikov}.  In
addition, the proof of Theorem \ref{mainapplication} will also describe how to find
$v^{\prime}$ and $w^{\prime}$ (easily). Special cases of Theorem
\ref{mainapplication} enable us to recover the quantum Pieri rule for partial
flag varieties of type $A$ as in \cite{CFon} and the ``quantum to classical"
principle for complex Grassmannians as in \cite{BKT1}.

Geometrically, the Gromov-Witten invariants $N_{u, v}^{w, \lambda}$ count the
number of stable holomorphic maps from the projective line $\mathbb{P}^{1}$,
or more generally a rational curve,  to $G/B$. In particular, they are all
non-negative. There have been closed formulas/algorithms on the classical
intersection product  by Kostant-Kumar \cite{koku} and Duan \cite{duan} and on
the quantum product by the authors \cite{leungli22}. Yet sign cancelations are
involved in all these formulas/algorithms. For a complete flag variety (of
general type), the problem of finding a positive formula on either side
remains open. The ``quantum to classical" principle could help us to reduce such
an issue on the quantum side to the classical side. When $G=SL(n+1,
\mathbb{C})$, we note that positive formulas on the classical intersection
numbers have been given by Coskun \cite{coskun11}, \cite{coskun22}.

 The proof of Theorem \ref{mainthm} uses functorial relationships established
by the authors in \cite{leungli33} and it is combinatorial in nature. However,
for a special case (of $\lambda=\alpha^{\vee}$) of Theorem \ref{mainthm},
both a geometric proof of it and a combinatorial proof of its equivariant
extension can be found in \cite{liMihal}.  We also wish to see a geometric
proof of this theorem in the future.

\section{Proofs of theorems}
In this section, we first   fix the notations in section 2.1. Then we prove our  first  main theorem    in section 2.2.
  Finally in section 2.3, we obtain our second main theorem, as an  application of the first main theorem.

\subsection{Notations}\label{prelimiar}

  More details on Lie theory can be found for example in \cite{hum}, \cite{humalg}.

Let $G$ be a simply-connected complex simple Lie group of rank $n$
and
   $B\subset G$ be a Borel subgroup.  Let  $\Delta=\{\alpha_1, \cdots, \alpha_n\}\subset \mathfrak{h}^*$ be the simple
   roots and  $\{\alpha_1^\vee, \cdots, \alpha_n^\vee\}\subset\mathfrak{h}$ be the simple
   coroots, where
          $\mathfrak{h}$ is the corresponding Cartan subalgebra of $(G, B)$.
      Let $Q^\vee=\bigoplus_{i=1}^n\mathbb{Z}\alpha_i^\vee$  and  $\rho=\sum_{i=1}^n\chi_i\in \mathfrak{h}^*$.
   Here $\chi_i$'s are the fundamental weights, which for any $i, j$ satisfy
         $\langle \chi_i, \alpha_j^\vee\rangle =\delta_{i, j}$
       with respect to the natural pairing  $\langle\cdot, \cdot\rangle
                    :\mathfrak{h}^*\times\mathfrak{h}\rightarrow \mathbb{C}$.
   The Weyl group $W$  is generated by
           $\{s_1, \cdots, s_n\}$, where each $s_i=s_{\alpha_i}$ is a simple reflection   on   $\mathfrak{h}^*$
         defined  by    $
                s_i(\beta)=\beta-\langle \beta, \alpha_i^\vee\rangle\alpha_i.$
                            The root system is given by $R=W\cdot\Delta=R^+\sqcup (-R^+)$, where
                       $R^+=R\cap \bigoplus_{i=1}^n{\mathbb{Z}_{\geq 0}}\alpha_i$ is the set of positive roots.
        Each parabolic subgroup $P\supset B$ is in one-to-one correspondence with
         a subset $\Delta_P\subset \Delta$.  In fact, $\Delta_P$ is the set of simple roots of a Levi subgroup of $P$.
     Let $\ell: W\rightarrow \mathbb{Z}_{\geq 0}$ be the length function,   $W_P$ denote
      the Weyl subgroup generated by $\{s_\alpha~|~ \alpha\in
      \Delta_P\}$ and
        $W^{P}$ denote the subset $\{w\in W |  \ell(w)\leq \ell(v),\, \forall v\in wW_{P}\}$.
      Each coset in $W/W_{P}$ has a unique minimal length representative in $W^P$.

   The (co)homology of a (generalized) flag variety $X=G/P$ has an additive basis of Schubert (co)homology classes indexed by $W^P$:
       $ H_*(X,\mathbb{Z})=\bigoplus_{v\in W^P}\mathbb{Z}\sigma_{v}$, \,\, $H^*(X,\mathbb{Z})=\bigoplus_{u\in W^P}\mathbb{Z}\sigma^{u}$ with
        $   \langle\sigma^{u}, \sigma_v\rangle =\delta_{u, v}$ for any $u, v\in W^P$ \cite{bgg}.
        Note that each   $\sigma_u$ (resp. $\sigma^u$) is a class in the $2\ell(u)^{\mbox{th}}$-(co)homology. In particular,
          $H_2(X,\mathbb{Z})=\bigoplus_{\alpha_i\in \Delta\setminus\Delta_P}
         \mathbb{Z}\sigma_{s_i}$ can be canonically identified with
            $Q^\vee/Q^\vee_P$, where
          $Q^\vee_P:=\bigoplus_{\alpha_i\in \Delta_P}\mathbb{Z}\alpha_i^\vee$.
               For each $\alpha_j\in \Delta\setminus\Delta_P$, we introduce a formal variable $q_{\alpha_j^\vee+Q^\vee_P}$.
          For  $\lambda_P=\sum_{\alpha_j\in \Delta\setminus\Delta_P}a_j\alpha_j^\vee+Q^\vee_P\in H_2(X, \mathbb{Z})$,
          we denote $q_{\lambda_P}=\prod_{\alpha_j\in \Delta\setminus\Delta_P}q_{\alpha_j^\vee+Q^\vee_P}^{a_j}$.
  The      (\textbf{small}) \textbf{quantum cohomology}    $QH^*(X)=(H^*(X)\otimes\mathbb{Q}[\mathbf{q}],  \star)$ of $X$
     is   a commutative ring and has a $\mathbb{Q}[\mathbf{q}]$-basis of Schubert classes $\sigma^u=\sigma^u\otimes 1$.
       The \textit{quantum Schubert  structure constants} $N_{u, v}^{w, \lambda_P}$ for the
       quantum product
                 $$\sigma^u\star \sigma^v =\sum_{w\in W^P, \lambda_P\in Q^\vee/Q^\vee_P}    N_{u,v}^{w, \lambda_P}q_{\lambda_P}\sigma^w$$
  are
   genus zero Gromov-Witten invariants given by
       $N_{u, v}^{w, \lambda_P}=\int_{\overline{\mathcal{M}}_{0, 3}(X, \lambda_P)}\mbox{ev}_1^* (\sigma^u)\cup\mbox{ev}_2^*(\sigma^v)
                            \cup \mbox{ev}_3^*((\sigma^w)^\sharp)$.
  Here  $\overline{\mathcal{M}}_{0, 3}(X, \lambda_P)$ is the moduli space of stable maps of degree $\lambda_P\in H_2(X, \mathbb{Z})$ of $3$-pointed genus zero curves into $X$,
  $\mbox{ev}_i: \overline{\mathcal{M}}_{0, 3}(X, \lambda_P)\to X$  is   the $i$-th canonical evaluation map and
    $\{(\sigma^{w})^\sharp~|~ w\in W^P\}$ are the elements in $H^*(X)$ satisfying $\int_X(\sigma^{w'})^\sharp \cup\sigma^{w''}=\delta_{w', w''}$ for any $w', w''\in W^P$ \cite{fupa}.  Note that $N_{u, v}^{w, \lambda_P}=0$ unless $q_{\lambda_P}\in \mathbb{Q}[\mathbf{q}]$.
  It is a well-known fact that these Gromov-Witten invariants $N_{u, v}^{w, \lambda_P}$ of the flag variety $X$ are enumerative, counting  the number of certain
   holomorphic maps from $\mathbb{P}^1$ to $X$. In particular,  they are all   \textbf{non-negative} integers. (Thus for the special case of a flag variety $X$, we can also define $QH^*(X)$ over $\mathbb{Z}$ whenever we wish.)

In analog with the classical cohomology, there is a natural $\mathbb{Z}$-grading on the quantum cohomolgy $QH^*(X)$, making it a $\mathbb{Z}$-graded ring:
       $$QH^*(X)=\bigoplus_{n\in \mathbb{Z}}\Big(\bigoplus_{\deg(q_{\lambda_P}\sigma^w)=n}\mathbb{Q}q_{\lambda_P}\sigma^w\Big).\qquad\qquad (*)$$
  Here the degree of $q_{\lambda_P}\sigma^w$, where  $\lambda_P=\sum_{\alpha_j\in \Delta\setminus\Delta_P}a_j\alpha_j^\vee+Q^\vee_P\in H_2(X, \mathbb{Z})$, is given by
     $$ \deg(q_{\lambda_P}\sigma^w)=\ell(w)+\sum_{\alpha_j\in \Delta\setminus\Delta_P}a_j\langle\sigma_{s_j}, c_1(X)\rangle,$$
  in which $\langle \cdot, \cdot\rangle$ is the natural pairing between homology and cohomology classes, and an explicit description of the first Chern class $c_1(X)$ can be found for example in \cite{fw}.
 When $P=B$, we have $\Delta_P=\emptyset, Q^\vee_P=0$, $W_P=\{1\}$ and $W^P=W$. In this case, we simply denote $\lambda=\lambda_P$ and $q_j=q_{\alpha_j^\vee}$.
As a direct consequence of the $\mathbb{Z}$-graded ring structure $(*)$ of $QH^*(G/B)$,
   for any $u, v, w\in W$ and for any $\lambda\in Q^\vee$, we have
     $$N_{u, v}^{w, \lambda}=0  \mbox{ unless } \ell(w)+\langle 2\rho, \lambda\rangle=\ell(u)+\ell(v).$$

\subsection{Proof of Theorem \ref{mainthm}}
This subsection is devoted to the proof of Theorem \ref{mainthm}. The main arguments are given in section \ref{subarguproofmainthm},
  based on the results in \cite{leungli33} which will be  reviewed in section \ref{subZ2filtration}. We will introduce the
Peterson-Woodward comparison formula first in section \ref{subPWcomparison}. This comparison formula not only plays an important role in
obtaining the results in \cite{leungli33}, but  also shows us that it suffices to know
 all quantum Schubert structure constants  $N_{u, v}^{w, \lambda}$ for $G/B$ in order to know all quantum Schubert structure constants  for all $G/P$'s.

\subsubsection{Peterson-Woodward comparison formula}\label{subPWcomparison}
   We use $\star_P$ to distinguish the quantum products for different  flag varieties $G/P$'s (when needed). For any $u, v\in W^P$,
   we have
      $\sigma^u\star_P\sigma^v=\sum\limits_{w\in W^P, \lambda_P\in Q^\vee/Q^\vee_P}    N_{u,v}^{w, \lambda_P}q_{\lambda_P}\sigma^w$.
 Note $W^P\subset W$.  The classes
  $\sigma^u$ and $\sigma^v$ in $QH^*(G/P)$
   can both be treated as classes in $QH^*(G/B)$ naturally.
   Whenever  referring to $N_{u, v}^{w, \lambda}$ where
     $\lambda\in Q^\vee$,   we are considering   the
   quantum product in $QH^*(G/B)$:  $\sigma^u\star_B\sigma^v=\sum\limits_{w\in W, \lambda\in Q^\vee}    N_{u,v}^{w, \lambda}q_{\lambda}\sigma^w$.

      \begin{prop}[Peterson-Woodward comparison formula  \cite{wo}; see also \cite{lamshi}]\label{comparison}
    ${}$
    \begin{enumerate}
      \item Let $\lambda_P\in Q^\vee/Q_P^\vee$. Then there is a unique $\lambda_B\in Q^\vee$ such that $\lambda_P=\lambda_B+Q_P^\vee$ and
                $\langle \gamma, \lambda_B\rangle  \in \{0, -1\}$ for all $\gamma\in R^+_P \,\,(=R^+\cap \bigoplus_{\beta\in \Delta_P}\mathbb{Z}\beta)$.
      \item              For every $u, v, w\in W^P$, we have  $$N_{u,v}^{w, \lambda_P }=N_{u, v}^{w\omega_P\omega_{P'},   \lambda_B}.$$
               Here  $\omega_{P}$ (resp. $\omega_{P'}$) is the longest element
                in the Weyl subgroup $W_P$ (resp. $W_{P'}$), where    $\Delta_{P'}=\{\beta\in \Delta_P~|~\langle  \beta, \lambda_B\rangle =0\}$.
      \end{enumerate}
    \end{prop}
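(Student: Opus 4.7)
The plan is to prove the two parts of Proposition \ref{comparison} separately. Part (1) is an essentially combinatorial/lattice-theoretic statement about the coroot lattice $Q^\vee$ and its sublattice $Q_P^\vee$, while part (2) is the deep Gromov-Witten comparison that requires substantial external machinery.

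For part (1), I would fix any lift $\lambda_B^{(0)} \in Q^\vee$ of $\lambda_P$ and then adjust by an element of $Q_P^\vee$. The constraint $\langle \gamma, \lambda_B\rangle \in \{0, -1\}$ for all $\gamma \in R_P^+$ carves out a region whose lattice points form a fundamental domain for the translation action of $Q_P^\vee$ on the affine hyperplane $\lambda_B^{(0)} + Q_P^\vee \otimes \mathbb{R}$; this region is essentially the closure of a fundamental alcove for the affine Weyl group $W_P \ltimes Q_P^\vee$ restricted to the span of $Q_P^\vee$. For existence I would use a greedy algorithm: whenever some $\gamma \in R_P^+$ violates the constraint, translate by an appropriate $\beta^\vee \in Q_P^\vee$ to decrease a suitable energy function such as $\sum_{\gamma \in R_P^+}\langle \gamma,\lambda_B\rangle^2$, and iterate until termination. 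For uniqueness, if $\lambda_B$ and $\lambda_B + \mu$ both satisfied the constraints with $\mu \in Q_P^\vee \setminus \{0\}$, then $\langle \gamma, \mu\rangle \in \{-1, 0, 1\}$ for all $\gamma \in R_P^+$; additivity of positive roots $\gamma_1 + \gamma_2 \in R_P^+$ then forces these tight values to be mutually incompatible unless $\mu = 0$.

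For part (2), the identity $N_{u,v}^{w,\lambda_P} = N_{u,v}^{w\omega_P\omega_{P'},\lambda_B}$, I would invoke Peterson's theorem identifying $QH^*(G/P)$, after localizing at the quantum parameters, with a quotient of the homology $H_*(\mathrm{Gr}_G)$ of the affine Grassmannian of $G$. Under this identification, Schubert classes indexed by $W^P$ correspond to certain affine Schubert classes indexed by minimal-length coset representatives in the affine Weyl group $\tilde{W}/W_P$, and quantum multiplication matches the Pontryagin product. Tracking how the $G/B$ quantum product projects onto the $G/P$ quantum product through this identification yields the formula, with $\omega_P \omega_{P'}$ appearing as the element that converts the naive $W^P$-representative to the minimal lift compatible with $\lambda_B$; the role of $\Delta_{P'} = \{\beta \in \Delta_P : \langle \beta,\lambda_B\rangle = 0\}$ is precisely to detect the face along which the fiber of $G/B \to G/P$ degenerates when the curve class is $\lambda_B$.

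The main obstacle is unquestionably part (2): unlike part (1), which reduces to a finite combinatorial problem, this identity is not purely formal and requires either Peterson's theorem as a black box (itself a substantial result, reformulated and proven in \cite{lamshi}) or Woodward's original moduli-theoretic approach comparing stable maps into $G/P$ with those into $G/B$. The key subtlety is the appearance of $\omega_P \omega_{P'}$: the statement is \emph{not} a naive pullback or pushforward identity, and explaining why precisely this element arises — rather than some other correction — is the crux of the proof. A cleaner alternative avoiding the affine Grassmannian would be the Lam-Shimozono affine nilHecke-algebra framework, where the Peterson subalgebra controls all $QH^*(G/P)$'s simultaneously and the comparison emerges as an algebraic consequence of structure constants within the Peterson subalgebra.
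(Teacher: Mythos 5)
The first thing to say is that the paper does not prove this proposition: it is quoted as an external result from Woodward \cite{wo} (see also Lam--Shimozono \cite{lamshi}) and used as a black box throughout, so there is no internal proof to compare against. Your treatment of part (2) is therefore on exactly the same footing as the paper's --- you defer to Peterson's theorem on the affine Grassmannian, to the Lam--Shimozono nilHecke framework, or to Woodward's moduli-theoretic comparison, which are precisely the cited sources --- and your description of the roles of $\omega_P\omega_{P'}$ and of $\Delta_{P'}$ is consistent with those references. This is acceptable, but be aware you are restating where the proof lives rather than giving one.

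Part (1) is the one place you offer an actual argument, and while the outline is right, two points need tightening. First, the region $\{x : -1\le\langle \gamma, x\rangle\le 0 \text{ for all } \gamma\in R_P^+\}$ is \emph{not} the closure of a fundamental alcove for $W_P\ltimes Q_P^\vee$, nor a fundamental domain for translation by $Q_P^\vee$ (already in rank one it has half the volume of a fundamental domain); the correct assertion, which is all you need, is only that it contains exactly one lattice point from each coset of $Q_P^\vee$, and that is what must be proved. Second, your uniqueness step should be made precise: if $\mu\in Q_P^\vee$ satisfies $|\langle\gamma,\mu\rangle|\le 1$ for all $\gamma\in R_P^+$, this condition is $W_P$-invariant, so you may replace $\mu$ by its dominant $W_P$-conjugate and pair against the highest root $\theta$ of a component of $R_P$ meeting the support of $\mu$; a nonzero dominant element of the coroot lattice satisfies $\langle\theta,\mu\rangle\ge 2$ (otherwise $\mu$ would be a minuscule coweight, which never lies in the coroot lattice), a contradiction. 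Your appeal to ``additivity of positive roots'' gestures at this but is not yet a proof. Similarly, the greedy energy-descent existence argument needs a termination certificate; it is cleaner to exhibit $\lambda_B$ explicitly, as is done in \cite{wo} and \cite{lamshi} and as the present paper does in its Grassmannian computations in Section 2.3.
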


Thanks to the above proposition, we obtain an injection  of vector spaces
      $$\psi_{\Delta, \Delta_P}: QH^*(G/P)\longrightarrow QH^*(G/B) \,\,\mbox{defined by }  q_{\lambda_P}\sigma^w\mapsto q_{\lambda_B}\sigma^{w\omega_{P}\omega_{P'}}.$$
For the special case of a singleton subset $\{\alpha\}\subset \Delta$, we denote $P_\alpha=P$ and simply denote $\psi_\alpha=\psi_{\Delta, \{\alpha\}}$. In this case,
we note that $R_{P_\alpha}^+=\{\alpha\}, Q_{P_\alpha}^\vee=\mathbb{Z}\alpha^\vee$, and we have the natural fibration
 $P_\alpha/B\rightarrow G/B\rightarrow G/P_\alpha  \mbox{ with } P_\alpha/B\cong \mathbb{P}^1.$
\begin{example}
   Let $\lambda_{P_\alpha}=\beta^\vee+Q_{P_\alpha}^\vee$ where $\beta\in \Delta\setminus\{\alpha\}$.  Then we have $\langle \alpha, \beta^\vee\rangle \in \{0, -1, -2, -3\}$.
   Furthermore, we have

      $\hspace{2.7cm}\psi_\alpha(q_{\lambda_{P_\alpha}})=\begin{cases}
          q_{\beta^\vee}, &\mbox{ if } \langle \alpha, \beta^\vee\rangle =0\\
           s_\alpha q_{\beta^\vee}, &\mbox{ if } \langle \alpha, \beta^\vee\rangle =-1\\
            q_{\alpha^\vee}q_{\beta^\vee}, &\mbox{ if } \langle \alpha, \beta^\vee\rangle =-2\\
           s_\alpha  q_{\alpha^\vee}q_{\beta^\vee}, &\mbox{ if } \langle \alpha, \beta^\vee\rangle =-3 \,\, {\displaystyle .}
      \end{cases}$

\noindent More generally, 
 we consider  $\lambda_{P_\alpha}\!\!=\lambda'+Q_{P_\alpha}^\vee\!\!\in Q^\vee/Q_{P_\alpha}^\vee$, where $\lambda'=\sum_{\beta\in \Delta\setminus\{\alpha\}}c_\beta\beta^\vee\in Q^\vee.$
  Setting $m=\langle \alpha, \lambda'\rangle$, we have
    $\psi_\alpha(q_{\lambda_{P_\alpha}}\sigma^w)=\begin{cases}
          q_{\lambda'-{m\over 2}\alpha^\vee}\sigma^w, &\mbox{ if } m \mbox{ is even}\\
           q_{\lambda'-{m+1\over 2}\alpha^\vee}\sigma^{ws_\alpha}, &\mbox{ if } m \mbox{ is odd}\\
      \end{cases}$.
\end{example}

\subsubsection{$\mathbb{Z}^2$-filtrations on $QH^*(G/B)$}\label{subZ2filtration}

As shown in \cite{leungli33}, given any parabolic subgroup $P$ of $G$ containing $B$,
 we can construct a $\mathbb{Z}^{|\Delta_P|+1}$-filtration on $QH^*(G/B)$.
In this subsection, we review the main results in \cite{leungli33} for the special case of a parabolic subgroup that corresponds to a singleton subset  $\{\alpha\}$. Using them,
we   prove Theorem \ref{mainthm} in next subsection.

 Recall that a natural basis of $QH^*(G/B)[q_1^{-1}, \cdots, q_n^{-1}]$ is given by  $q_\lambda\sigma^{w}$'s labelled by $(w, \lambda)\in W\times Q^\vee$.     Note that
   $q_\lambda\sigma^{w}\in QH^*(G/B)$ if and only if $q_\lambda\in \mathbb{Q}[\mathbf{q}]$ is a polynomial.
  In order to obtain a  filtration on $QH^*(G/B)$, we just need to define (nice) gradings for a given  basis of it.
  Furthermore as in the introduction,  we have defined a map $\mbox{sgn}_\alpha$ with respect to  any given simple root $\alpha\in \Delta$  as follows.
         $$\mbox{sgn}_\alpha:  W\rightarrow \{0, 1\};\,\, \mbox{sgn}_\alpha(w)=\begin{cases}
             1, & \mbox{if } \ell(w)-\ell(ws_\alpha)>0\\
             0, &\mbox{if } \ell(w)-\ell(ws_\alpha)\leq 0
         \end{cases}.
$$
\noindent Note    that
              $\ell(w)-\ell(ws_\alpha)=\pm 1$ and that $\ell(w)-\ell(ws_\alpha)=1$ if and only if $w(\alpha)\in -R^+$, which holds
              if and only if $w=us_\alpha \mbox{ for a unique } u\in W^{P_\alpha}$. (See e.g. \cite{humr}.)
We can define a grading map $gr_\alpha$ with
   respect to a given simple root $\alpha\in \Delta$ as follows.
     \begin{align*}
         gr_\alpha: & W\times Q^\vee\longrightarrow \mathbb{Z}^2;\\
                  &\,  gr_\alpha(q_\lambda\sigma^{w})=(\mbox{sgn}_\alpha(w)+\langle \alpha, \lambda\rangle, \ell(w)+\langle 2\rho, \lambda\rangle-\mbox{sgn}_\alpha(w)-\langle \alpha, \lambda\rangle).
     \end{align*}
Here we are using  \textit{lexicographical
order}    on    $\mathbb{Z}^2$. That is,
    $\mathbf{a}=(a_1, a_2)<\mathbf{b}=(b_1, b_2)$ if and  only if  either   ($a_1=b_1$ and $a_2<b_2$) or     $a_{1}<b_{1}$ holds.
     The above $\mathbb{Z}^2$-grading   of $q_\lambda\sigma^{w}$ can recover the degree grading of it as in section \ref{prelimiar}. Precisely, if we
     write $gr_\alpha(q_\lambda\sigma^{w})=(i, j)$, then $\deg(q_\lambda\sigma^{w})=i+j$.

\begin{remark}
   Following from Corollary 3.13 of \cite{leungli33}, our grading map $gr_\alpha$ coincides with the grading map in Definition 2.8 of \cite{leungli33} by using
   the Peterson-Woodward lifting map $\psi_\alpha=\psi_{\Delta, \{\alpha\}}$.

\end{remark}

  As a consequence, we obtain a     family    $\mathcal{F}=\{F_{\mathbf{a}}\}_{\mathbf{a}\in \mathbb{Z}^2}$ of vector subspaces of $QH^*(G/B)$,
   where   $F_{\mathbf{a}}:=\bigoplus\limits_{gr_\alpha(q_\lambda\sigma^{w})\leq \mathbf{a}}\mathbb{Q}q_\lambda\sigma^{w}\subset QH^*(G/B)$,
  and
  the associated graded vector space  $Gr^{\mathcal{F}}(QH^*(G/B))=\bigoplus_{\mathbf{a}\in \mathbb{Z}^2} Gr_\mathbf{a}^{\mathcal{F}}$
  with respect to $\mathcal{F}$, where $Gr_{\mathbf{a}}^{\mathcal{F}}:=F_{\mathbf{a}}\big/\cup_{\mathbf{b}<\mathbf{a}}F_{\mathbf{b}}.$

\begin{prop}[Theorem 1.2 of \cite{leungli33}]\label{propfilteralgebra}
      $QH^*(G/B)$ is  a   $\mathbb{Z}^2$-filtered algebra with respect to   $\mathcal{F}$. That is, we have $F_\mathbf{a}\star F_\mathbf{b}\subset F_{\mathbf{a}+\mathbf{b}}$
 for any $\mathbf{a}, \mathbf{b}\in \mathbb{Z}^2$.
\end{prop}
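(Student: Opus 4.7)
The strategy is to reduce the filtration property to an additivity statement about a single numerical invariant, and then derive that statement by transferring additivity from $QH^*(G/P_\alpha)$ to $QH^*(G/B)$ via the Peterson--Woodward comparison.

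First I would observe that the second coordinate of $gr_\alpha(q_\lambda \sigma^w)$ is simply $\deg(q_\lambda \sigma^w)$ minus the first coordinate. Since the $\mathbb{Z}$-grading $(*)$ already makes $\deg$ additive under $\star$, the lexicographic inequality $gr_\alpha(q_\nu \sigma^w) \leq gr_\alpha(q_\lambda \sigma^u) + gr_\alpha(q_\mu \sigma^v)$ for every term appearing in $q_\lambda \sigma^u \star q_\mu \sigma^v$ is automatic as soon as the first coordinates satisfy it; after writing $\nu = \lambda+\mu+\lambda'$, the content of the proposition collapses to the clean assertion
\[
N_{u,v}^{w,\lambda'}\neq 0 \ \Longrightarrow\ \mbox{sgn}_\alpha(w)+\langle\alpha,\lambda'\rangle \;\leq\; \mbox{sgn}_\alpha(u)+\mbox{sgn}_\alpha(v).
\]

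Next I would reinterpret the first coordinate of $gr_\alpha$ through the Peterson--Woodward injection $\psi_\alpha\colon QH^*(G/P_\alpha)\to QH^*(G/B)$. The example computation makes this explicit: after a parity case split on $m=\langle\alpha,\lambda'\rangle$, the quantity $\mbox{sgn}_\alpha(w)+\langle\alpha,\lambda\rangle$ is precisely the total degree of the preimage class $\psi_\alpha^{-1}(q_\lambda \sigma^w)$ in the natural $\mathbb{Z}$-grading of $QH^*(G/P_\alpha)$, restricted so that the sole quantum variable $q_{\alpha^\vee}$ of the fibre $P_\alpha/B\cong\mathbb{P}^1$ is suppressed. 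Since $QH^*(G/P_\alpha)$ is itself a $\mathbb{Z}$-graded algebra under its own total degree, this ``$G/P_\alpha$-degree'' is \emph{strictly} additive under $\star_{P_\alpha}$.

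The final step is to pass strict additivity on $G/P_\alpha$ to the weak additivity on $G/B$. Given $u,v,w\in W$ and $\lambda'\in Q^\vee$ with $N_{u,v}^{w,\lambda'}\neq 0$, I would pass to the minimal coset representatives and apply the identity $N_{u',v'}^{w',\lambda_{P_\alpha}}(G/P_\alpha)=N_{u',v'}^{w'\omega_{P_\alpha}\omega_{P'},\lambda_B}(G/B)$ from Proposition \ref{comparison}, reading off the first-coordinate inequality on $G/B$ from degree additivity of the corresponding product on $G/P_\alpha$. The main obstacle is that a nonzero $N_{u,v}^{w,\lambda'}$ on $G/B$ need not correspond directly to a lifted product; one must show that such ``non-lifted'' contributions sit in strictly lower filtration, so that the lifted part governs the leading first coordinate. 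Making this rigorous requires a careful case analysis keyed to whether $\mbox{sgn}_\alpha(u),\mbox{sgn}_\alpha(v)$ equal $0$ or $1$ and to the parity of $\langle\alpha,\lambda'\rangle$, matching the four-case structure of the example computation; this combinatorial core is the delicate content of \cite{leungli33} and is where the real work lies.
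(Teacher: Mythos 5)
Your first paragraph is fine as far as it goes: since the total degree of every term in a quantum product is exactly additive by the $\mathbb{Z}$-grading $(*)$, and the two coordinates of $gr_\alpha$ sum to that degree, the lexicographic inequality does collapse to the first-coordinate inequality $\mbox{sgn}_\alpha(w)+\langle\alpha,\lambda'\rangle\leq\mbox{sgn}_\alpha(u)+\mbox{sgn}_\alpha(v)$ whenever $N_{u,v}^{w,\lambda'}\neq 0$. But note what you have reduced to: this is precisely part (1) of Theorem \ref{mainthm}, which in this paper is \emph{deduced from} Proposition \ref{propfilteralgebra}. The paper itself gives no proof of the proposition at all --- it is quoted as Theorem 1.2 of \cite{leungli33} --- so your reduction, inside this paper's logical structure, runs in a circle unless you supply an independent proof of that inequality, and that is exactly what your remaining two steps fail to do.

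The two remaining steps do not close the gap. The reinterpretation of the first coordinate in your second step is backwards: the image of $\psi_\alpha$ consists of the elements $q_{\lambda_B}\sigma^{w\omega_{P_\alpha}\omega_{P'}}$, and a direct check with the parity formula in the Example shows these all have \emph{first coordinate zero} (e.g.\ $1+m-(m+1)=0$ in the odd case); the first coordinate is the vertical ($\mathbb{P}^1$-fiber) degree, while it is the \emph{second} coordinate that matches the grading coming from $QH^*(G/P_\alpha)$. Moreover $\psi_\alpha$ is only an injection, so ``$\psi_\alpha^{-1}(q_\lambda\sigma^w)$'' does not exist for a general basis element, and $q_{\alpha^\vee}$ is not a quantum variable of $QH^*(G/P_\alpha)$ to be ``suppressed.'' Your third step then concedes the real issue: Proposition \ref{comparison} only identifies the very special invariants $N_{u,v}^{w\omega_{P_\alpha}\omega_{P'},\lambda_B}$ with $u,v,w\in W^{P_\alpha}$, whereas the inequality must hold for \emph{all} $N_{u,v}^{w,\lambda}$ of $G/B$, almost none of which are of lifted form; showing that the ``non-lifted'' contributions obey the bound is the entire content of the theorem, and you explicitly defer it to ``the delicate content of \cite{leungli33}.'' In \cite{leungli33} this is established by a substantial argument (induction via the quantum Chevalley formula together with the Peterson--Woodward comparison), none of which is reproduced or replaced here. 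So the proposal identifies the right target statement but does not prove it.
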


Denote     $Gr_{\scriptsize\mbox{vert}}^{\mathcal{F}}(QH^*(G/B)):=\bigoplus\limits_{i\in \mathbb{Z}}Gr_{(i, 0)}^{\mathcal{F}}$ and
  $Gr_{\scriptsize\mbox{hor}}^{\mathcal{F}}(QH^*(G/B)):=\bigoplus\limits_{j\in \mathbb{Z}}Gr_{(0, j)}^{\mathcal{F}}$.
 We take
    the canonical isomorphism  $QH^*(\mathbb{P}^1)\cong {\mathbb{Q}[x, t]\over \langle x^{2}-t\rangle }$ for the fiber of the fibration
    $\mathbb{P}^1\rightarrow G/B\rightarrow G/P_{\alpha}$.

\begin{prop}[Theorem 1.4 of \cite{leungli33}]\label{propfiberisomor}
  The following  maps {\upshape $\Psi_{\scriptsize\mbox{vert}}^\alpha$} and  {\upshape $\Psi_{\scriptsize\mbox{hor}}^\alpha$} are well-defined and they are algebra
    isomorphisms\footnote{In terms of notations in \cite{leungli33},  $\Psi_{\scriptsize\mbox{vert}}^\alpha=\Psi_1$ and  $\Psi_{\scriptsize\mbox{hor}}^\alpha=\Psi_2$.}.
  {\upshape $$\begin{array}{rrc}
       \Psi_{\scriptsize\mbox{vert}}^\alpha:&  QH^*(\mathbb{P}^1) \longrightarrow  Gr_{\scriptsize\mbox{vert}}^{\mathcal{F}}(QH^*(G/B)); &
              \quad x\mapsto            \overline{s_\alpha},\,\,  t\mapsto \overline{q_{\alpha^\vee}}\,\,\,.\\
         \Psi_{\scriptsize\mbox{hor}}^\alpha:&  QH^*(G/P_\alpha) \longrightarrow  Gr_{\scriptsize\mbox{hor}}^{\mathcal{F}}(QH^*(G/B)); &
                  \,\,     q_{\lambda_{P_\alpha}}\sigma^w\mapsto           \overline{\psi_{\alpha}(q_{\lambda_{P_\alpha}}\sigma^w)}\,\,\,.
\end{array}$$
  }
\end{prop}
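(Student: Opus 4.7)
The plan is to prove each isomorphism in two stages: first identify a preferred basis of the target slice of $Gr^{\mathcal{F}}(QH^*(G/B))$ explicitly, then verify that the asserted generator assignments respect the defining multiplicative relations of the source ring. The basis identification is a combinatorial calculation directly from the definition of $gr_\alpha$. Writing $\lambda=\sum_i c_i\alpha_i^\vee\in Q^\vee_{\geq 0}$, the condition $gr_\alpha(q_\lambda\sigma^w)=(i,0)$ rearranges to $(\ell(w)-\mbox{sgn}_\alpha(w))+\sum_{\beta\neq\alpha}(2-\langle\alpha,\beta^\vee\rangle)c_\beta=0$, a sum of nonnegative integers. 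This forces $\lambda=c_\alpha\alpha^\vee$ and $w\in\{e,s_\alpha\}$, so each $Gr^{\mathcal{F}}_{(i,0)}$ is one-dimensional, spanned by $\overline{q_{k\alpha^\vee}}$ when $i=2k$ and by $\overline{q_{k\alpha^\vee}\sigma^{s_\alpha}}$ when $i=2k+1$. Using the explicit description of $\psi_\alpha$ given in the example following Proposition~\ref{comparison}, the standard basis $\{q_{\lambda_{P_\alpha}}\sigma^w\}_{w\in W^{P_\alpha}}$ of $QH^*(G/P_\alpha)$ maps bijectively under $\psi_\alpha$ onto the set of $q_\lambda\sigma^w$ with $\mbox{sgn}_\alpha(w)+\langle\alpha,\lambda\rangle=0$, which exhausts a basis of $Gr^{\mathcal{F}}_{\text{hor}}$. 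Both $\Psi_{\text{vert}}^\alpha$ and $\Psi_{\text{hor}}^\alpha$ are therefore linear bijections.

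For multiplicativity of $\Psi_{\text{vert}}^\alpha$, it suffices to verify the single relation $\overline{\sigma^{s_\alpha}}\star\overline{\sigma^{s_\alpha}}=\overline{q_{\alpha^\vee}}$ in $Gr^{\mathcal{F}}_{(2,0)}$. I would expand $\sigma^{s_\alpha}\star\sigma^{s_\alpha}$ using the quantum Chevalley formula and observe that the unique summand landing in $Gr^{\mathcal{F}}_{(2,0)}$ is the quantum contribution from the root $\beta=\alpha$, which equals $q_{\alpha^\vee}\cdot 1$; all other summands are classical terms $\sigma^{s_\alpha s_\gamma}$ of length $2$ whose $gr_\alpha$-value is $(0,2)$ or $(1,1)$, both strictly less than $(2,0)$ in lexicographic order, hence they vanish in the associated graded. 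Multiplicativity on remaining products then follows since $\overline{\sigma^{s_\alpha}}$ and $\overline{q_{\alpha^\vee}}$ generate the vertical slice algebraically and Proposition~\ref{propfilteralgebra} ensures the induced graded product is well-defined.

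For multiplicativity of $\Psi_{\text{hor}}^\alpha$, given $u,v\in W^{P_\alpha}$ we have $gr_\alpha(\sigma^u)=(0,\ell(u))$ and $gr_\alpha(\sigma^v)=(0,\ell(v))$, so by Proposition~\ref{propfilteralgebra} the product $\sigma^u\star_B\sigma^v$ lies in $F_{(0,\ell(u)+\ell(v))}$ and its image in $Gr^{\mathcal{F}}_{\text{hor}}$ is supported on the horizontal strip. Each horizontal basis element is a $\psi_\alpha$-image of a unique $q_{\lambda_{P_\alpha}}\sigma^w$ with $w\in W^{P_\alpha}$, and the Peterson--Woodward comparison formula (Proposition~\ref{comparison}) identifies the corresponding Gromov--Witten coefficient with the $QH^*(G/P_\alpha)$ structure constant $N_{u,v}^{w,\lambda_{P_\alpha}}$. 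Consequently the graded image of $\sigma^u\star_B\sigma^v$ equals $\psi_\alpha(\sigma^u\star_{P_\alpha}\sigma^v)$, which is the required multiplicativity.

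The main obstacle I expect is the horizontal case: although Peterson--Woodward matches structure constants index-by-index, I need to verify that the parity split in the definition of $\psi_\alpha$ (even vs.\ odd $m=\langle\alpha,\lambda'\rangle$) is compatible with the two possibilities $w\omega_{P_\alpha}\omega_{P_\alpha'}\in\{w,ws_\alpha\}$ arising in Proposition~\ref{comparison}(2), and that no basis element of $Gr^{\mathcal{F}}_{\text{hor}}$ outside the image of $\psi_\alpha$ can appear in the expansion of $\sigma^u\star_B\sigma^v$. Both issues reduce to the uniqueness of the lift $\lambda_B$ in Proposition~\ref{comparison}(1), but matching the two parities to the two formulas for $w\omega_{P_\alpha}\omega_{P_\alpha'}$ requires careful case analysis.
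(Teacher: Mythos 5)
The paper offers no proof of this proposition: it is imported verbatim as Theorem~1.4 of \cite{leungli33}, where it is established for an arbitrary parabolic $P$ (a $\mathbb{Z}^{|\Delta_P|+1}$-filtration) by a considerably longer inductive argument. Your direct argument for the rank-one case is therefore necessarily a different route, and it is essentially sound. Your basis computation is correct: the second component of $gr_\alpha(q_\lambda\sigma^w)$ does equal $\bigl(\ell(w)-\mbox{sgn}_\alpha(w)\bigr)+\sum_{\beta\neq\alpha}(2-\langle\alpha,\beta^\vee\rangle)c_\beta$, a sum of nonnegative terms, so the vertical slice is spanned by $\overline{q_{k\alpha^\vee}}$ and $\overline{q_{k\alpha^\vee}\sigma^{s_\alpha}}$; and the image of $\psi_\alpha$ is exactly the set of $q_\lambda\sigma^w$ with $\mbox{sgn}_\alpha(w)+\langle\alpha,\lambda\rangle=0$, which spans the horizontal slice (one should also note that effectivity of $\lambda_{P_\alpha}$ matches effectivity of $\lambda_B$, since $\lambda_B$ differs from $\lambda'$ only by a nonnegative multiple of $\alpha^\vee$). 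The quantum Chevalley computation of $\overline{\sigma^{s_\alpha}}\star\overline{\sigma^{s_\alpha}}=\overline{q_{\alpha^\vee}}$ is correct, since $\langle 2\rho,\gamma^\vee\rangle\geq 2$ forces $\gamma=\alpha$ as the only quantum contribution. For the horizontal map, your reduction of multiplicativity on Schubert classes to Peterson--Woodward is exactly right, because every basis element of $Gr^{\mathcal{F}}_{(0,j)}$ is a $\psi_\alpha$-image, so no terms are lost. The case analysis you flag at the end does go through and is less delicate than you fear: writing $\Psi^\alpha_{\scriptsize\mbox{hor}}(q_{\lambda_{P_\alpha}}\sigma^w)=\overline{q_{\lambda_B}}\star\overline{\sigma^w}\star\overline{\sigma^{s_\alpha}}^{\,\epsilon}$ with $\epsilon\equiv m\pmod 2$ (using Proposition~\ref{propgradingcomp} for $\overline{\sigma^{ws_\alpha}}=\overline{\sigma^w}\star\overline{\sigma^{s_\alpha}}$), the parity bookkeeping is entirely absorbed by the relation $\overline{\sigma^{s_\alpha}}^{\,2}=\overline{q_{\alpha^\vee}}$ already verified for the vertical map, together with the identity $\lambda_B+\mu_B+\epsilon_1\epsilon_2\,\alpha^\vee=(\lambda+\mu)_B$, which follows from the explicit formula $\lambda_B=\lambda'-\lceil m/2\rceil\alpha^\vee$. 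What your approach buys is a short, self-contained proof in the only case this paper uses; what it does not give is the general-$P$ statement of \cite{leungli33}, where the corresponding "horizontal" factor is $QH^*(P/B)$ rather than $QH^*(\mathbb{P}^1)$ and the one-line Chevalley verification has no analogue.
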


Here we note that $\overline{s_\alpha} \in Gr^\mathcal{F}_{(1, 0)}\subset Gr_{\scriptsize\mbox{vert}}^\mathcal{F}(QH^*(G/B))$ denotes the
 graded component of $\sigma^{s_\alpha}+\cup_{\mathbf{b}<(1, 0)}F_{\mathbf{b}}$. 
 Similar notations are taken whenever ``$\overline{(\,)}$" is used.
In addition, we have

\begin{prop}[Proposition 3.23 of \cite{leungli33}]\label{propgradingcomp}
   For any  $u\in W^{P_\alpha}$, we have  $\sigma^u\star \sigma^{s_\alpha}=\sigma^{us_\alpha}+\sum_{w, \lambda}b_{w, \lambda}q_\lambda \sigma^w$
    with $gr_\alpha(q_\lambda\sigma^{w})<gr_\alpha(\sigma^{us_\alpha})$ whenever $b_{w, \lambda}\neq 0$.
\end{prop}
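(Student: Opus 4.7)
The plan is to apply the Fulton--Woodward quantum Chevalley formula and check that every summand other than $\sigma^{us_\alpha}$ has $gr_\alpha$-grading strictly less than $(1,\ell(u))$. Since $u\in W^{P_\alpha}$ forces $u(\alpha)\in R^+$, we have $\mbox{sgn}_\alpha(u)=0$, $\ell(us_\alpha)=\ell(u)+1$ and $\mbox{sgn}_\alpha(us_\alpha)=1$, so $gr_\alpha(\sigma^{us_\alpha})=(1,\ell(u))$. Proposition \ref{propfilteralgebra} gives $\sigma^u\star\sigma^{s_\alpha}\in F_{(0,\ell(u))}\star F_{(1,0)}\subset F_{(1,\ell(u))}$, so every basis term in the expansion already satisfies $gr_\alpha\leq(1,\ell(u))$, and the task reduces to excluding basis terms other than $\sigma^{us_\alpha}$ at the top grading.

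Quantum Chevalley writes $\sigma^{s_\alpha}\star\sigma^u$ as the sum over positive roots $\beta$ with $\ell(us_\beta)=\ell(u)+1$ of classical terms $\langle\chi_\alpha,\beta^\vee\rangle\sigma^{us_\beta}$, plus the sum over positive $\beta$ with $\ell(us_\beta)=\ell(u)+1-\langle 2\rho,\beta^\vee\rangle$ of quantum terms $\langle\chi_\alpha,\beta^\vee\rangle q_{\beta^\vee}\sigma^{us_\beta}$. The $\beta=\alpha$ classical summand is precisely $\sigma^{us_\alpha}$ with coefficient $\langle\chi_\alpha,\alpha^\vee\rangle=1$. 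For every other classical summand with $\beta\in R^+\setminus\{\alpha\}$ and $\ell(us_\beta)=\ell(u)+1$, the essential combinatorial claim is that $us_\beta\in W^{P_\alpha}$, equivalently $us_\beta(\alpha)\in R^+$; granting this, $\mbox{sgn}_\alpha(us_\beta)=0$ and $gr_\alpha(\sigma^{us_\beta})=(0,\ell(u)+1)<(1,\ell(u))$ in lexicographic order. I would establish this claim by contradiction: supposing $us_\beta(\alpha)\in-R^+$ gives $us_\beta=vs_\alpha$ for some $v\in W^{P_\alpha}$ with $\ell(v)=\ell(u)$ and $v\neq u$ (since $\beta\neq\alpha$), and the resulting two length-one Bruhat covers $u<us_\beta$ and $v<us_\beta$, combined with a standard Coxeter exchange argument applied to a reduced expression of $us_\beta$, force $\beta=\alpha$.

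The hard part will be controlling the quantum summands. First, $\beta=\alpha$ is automatically excluded from the quantum sum since $u\in W^{P_\alpha}$ gives $\ell(us_\alpha)=\ell(u)+1\neq\ell(u)-1=\ell(u)+1-\langle 2\rho,\alpha^\vee\rangle$. For any $\beta\neq\alpha$ with $\langle\chi_\alpha,\beta^\vee\rangle\neq 0$ satisfying the quantum length equation, Proposition \ref{propfilteralgebra} already ensures $gr_\alpha(q_{\beta^\vee}\sigma^{us_\beta})\leq(1,\ell(u))$; the total of the two coordinates is then automatically $\ell(u)+1$, so excluding equality at $(1,\ell(u))$ amounts to excluding the first-coordinate equation $\mbox{sgn}_\alpha(us_\beta)+\langle\alpha,\beta^\vee\rangle=1$. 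I would split this into the two sub-cases $\langle\alpha,\beta^\vee\rangle=1$ with $\mbox{sgn}_\alpha(us_\beta)=0$ and $\langle\alpha,\beta^\vee\rangle=0$ with $\mbox{sgn}_\alpha(us_\beta)=1$, and use the Peterson--Woodward lifting formula of Proposition \ref{comparison} together with the explicit case table in the example preceding this proposition to identify the putative offending quantum Chevalley contribution as the image of a class from $QH^*(G/P_\alpha)$ whose horizontal grading sits in $F_{(0,\ell(u)+1)}$, hence strictly below $(1,\ell(u))$. An alternative direct route checks each sub-case against the root-system constraints to conclude that either the quantum length equation cannot hold or the Chevalley coefficient $\langle\chi_\alpha,\beta^\vee\rangle$ vanishes.
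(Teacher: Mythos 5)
Your reduction is set up correctly (the paper itself only cites this statement from \cite{leungli33}, so an independent quantum-Chevalley argument is a legitimate route), and your treatment of the classical Chevalley terms is essentially sound: the grading bookkeeping is right, and the claim that $\beta\neq\alpha$, $\ell(us_\beta)=\ell(u)+1$, $u(\alpha)\in R^+$ force $us_\beta(\alpha)\in R^+$ does follow from the strong exchange property applied to a reduced word of $us_\beta$ ending in $s_\alpha$ (or, more cleanly, from the lifting property of Bruhat order: $u\le us_\beta s_\alpha$ with equal lengths forces $u=us_\beta s_\alpha$). The genuine gap is in the quantum terms, which are the actual content of the proposition. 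Your first strategy is incoherent as stated: a ``putative offending'' term has first grading coordinate $1$ by definition, so it cannot be exhibited as lying in $F_{(0,\ell(u)+1)}$; what must be shown is that such a term never occurs in the Chevalley expansion. Your ``alternative direct route'' is only an assertion of what needs proving, and its claimed dichotomy is not even the correct conclusion in your second sub-case: if $\langle\alpha,\beta^\vee\rangle=0$ the quantum length equation may well hold with $\langle\chi_\alpha,\beta^\vee\rangle\neq 0$ (e.g.\ $\beta$ the highest root of $A_3$, $\alpha=\alpha_2$, $u$ the longest element of $W^{P_\alpha}$); what actually saves you there is that $s_\beta(\alpha)=\alpha$, hence $us_\beta(\alpha)=u(\alpha)\in R^+$ and $\mbox{sgn}_\alpha(us_\beta)=0$, so the sub-case is vacuous --- an observation missing from your sketch.

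The remaining sub-case, $\langle\alpha,\beta^\vee\rangle=1$ with $\beta\neq\alpha$, is the real crux and is left unproved: here, if the quantum condition $\ell(us_\beta)=\ell(u)+1-\langle 2\rho,\beta^\vee\rangle$ held, then $u(\beta)\in -R^+$ and $us_\beta(\alpha)=u(\alpha)-u(\beta)\in R^+$ automatically, so the term would be offending with positive coefficient; thus you must show the quantum length equation itself is impossible for $u\in W^{P_\alpha}$, and no root-system ``constraint check'' is indicated that does this. It can be done, but it needs an actual argument, e.g.: set $\gamma=s_\alpha(\beta)\in R^+$, so $\gamma^\vee=\beta^\vee-\alpha^\vee$ and $us_\alpha s_\gamma=us_\beta s_\alpha$; then $\ell(us_\alpha s_\gamma)\le \ell(us_\beta)+1=\ell(u)+2-\langle2\rho,\beta^\vee\rangle$, while the standard inequality $\ell(s_\gamma)\le\langle2\rho,\gamma^\vee\rangle-1$ together with $\ell(us_\alpha)=\ell(u)+1$ gives $\ell(us_\alpha s_\gamma)\ge \ell(u)+4-\langle2\rho,\beta^\vee\rangle$, a contradiction. (Terms with first coordinate $\ge 2$ are indeed disposed of by Proposition \ref{propfilteralgebra}, as you implicitly use.) Without this step, or some equivalent quantum-Bruhat-graph lemma, the proof is incomplete precisely where the proposition is nontrivial.
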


The next lemma follows directly from the definition of  the grading map $gr_\alpha$.

\begin{lemma}\label{lemmagradcomp}
  Let $u, v, w\in W$ and   $\lambda \in Q^\vee$. Then
  $gr_\alpha(\sigma^u)+gr_\alpha(\sigma^v)=gr_\alpha(q_\lambda\sigma^{w})$ if and only if both
  $\ell(w)+\langle 2\rho, \lambda\rangle=\ell(u)+\ell(v)$ and {\upshape $\mbox{sgn}_\alpha(w)+\langle \alpha, \lambda\rangle =\mbox{sgn}_\alpha(u)+\mbox{sgn}_\alpha(v)$} hold.

\end{lemma}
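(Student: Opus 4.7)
The plan is to verify this lemma by a direct unpacking of the definition of $gr_\alpha$. Since the grading map sends each basis element $q_\lambda \sigma^w$ to an explicit ordered pair in $\mathbb{Z}^2$, the claim ultimately reduces to comparing two $\mathbb{Z}^2$-vectors coordinate by coordinate.

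First I would specialize the formula for $gr_\alpha$ to $\lambda = 0$, which yields
\[
gr_\alpha(\sigma^u) = (\mbox{sgn}_\alpha(u),\, \ell(u) - \mbox{sgn}_\alpha(u)),
\qquad
gr_\alpha(\sigma^v) = (\mbox{sgn}_\alpha(v),\, \ell(v) - \mbox{sgn}_\alpha(v)).
\]
Adding componentwise gives
\[
gr_\alpha(\sigma^u) + gr_\alpha(\sigma^v) = \bigl(\mbox{sgn}_\alpha(u) + \mbox{sgn}_\alpha(v),\; \ell(u) + \ell(v) - \mbox{sgn}_\alpha(u) - \mbox{sgn}_\alpha(v)\bigr).
\]
On the other hand, by definition,
\[
gr_\alpha(q_\lambda \sigma^w) = \bigl(\mbox{sgn}_\alpha(w) + \langle \alpha, \lambda\rangle,\; \ell(w) + \langle 2\rho, \lambda\rangle - \mbox{sgn}_\alpha(w) - \langle \alpha, \lambda\rangle\bigr).
\]

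Then I would simply compare coordinates. Equality of the first coordinates is exactly the condition $\mbox{sgn}_\alpha(w) + \langle \alpha, \lambda\rangle = \mbox{sgn}_\alpha(u) + \mbox{sgn}_\alpha(v)$. Assuming the first-coordinate equality, the second-coordinate equality
\[
\ell(w) + \langle 2\rho, \lambda\rangle - \mbox{sgn}_\alpha(w) - \langle \alpha, \lambda\rangle \;=\; \ell(u) + \ell(v) - \mbox{sgn}_\alpha(u) - \mbox{sgn}_\alpha(v)
\]
simplifies at once (by cancelling the common value of the first coordinates from each side) to $\ell(w) + \langle 2\rho, \lambda\rangle = \ell(u) + \ell(v)$. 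Conversely, the two stated numerical conditions together plainly give both coordinate equalities.

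Since equality in $\mathbb{Z}^2$ (with the lexicographic order) means coordinatewise equality as tuples, the equivalence is immediate. The lemma is really a direct unpacking of the definition of $gr_\alpha$ and involves no substantive obstacle; the only ``work'' is to note that the degree-sum identity survives into the second coordinate exactly because one subtracts the first-coordinate sums on both sides.
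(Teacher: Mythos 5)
Your proof is correct and matches the paper's approach: the paper states that the lemma ``follows directly from the definition of the grading map $gr_\alpha$,'' and your coordinate-by-coordinate unpacking is precisely that direct verification. Nothing is missing.
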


\subsubsection{Proof of Theorem \ref{mainthm}}\label{subarguproofmainthm}
The first half of Theorem \ref{mainthm} is a direct consequence of Proposition \ref{propfilteralgebra}. Indeed,
 if
 $\mbox{sgn}_\beta(w)+\langle \beta, \lambda\rangle > \mbox{sgn}_\beta(u)+\mbox{sgn}_\beta(v)$  for some $\beta\in \Delta$,
 then $gr_\beta(q_\lambda  \sigma^w)> gr_\beta( \sigma^u)+gr_\beta( \sigma^v)$.
Since $\sigma^u\star \sigma^v\in F_{gr_\beta(\sigma^u)}\star F_{gr_\beta(\sigma^v)}\subset F_{gr_\beta(\sigma^u)+gr_\beta(\sigma^v)}$, we conclude
 $N_{u, v}^{w, \lambda}=0$.

 It remains to show the second half of Theorem \ref{mainthm}. Note that $\mbox{sgn}_\alpha$ is a map from $W$ to $\{0, 1\}$.
  Since $\mbox{sgn}_\alpha(u)+\mbox{sgn}_\alpha(v)=2$, we have
  $\mbox{sgn}_\alpha(u)=\mbox{sgn}_\alpha(v)=1$. Consequently, $u':=us_\alpha$ and $v':=vs_\alpha$ are both elements in $W^{P_\alpha}$.
In the rest, we can assume   $\ell(w)+\langle 2\rho, \lambda\rangle= \ell(u)+\ell(v)$. (Otherwise,  both $N_{u, v}^{w, \lambda}$ and  $N_{u', v'}^{w, \lambda-\alpha^\vee}$ would vanish,   directly   following   from the standard $\mathbb{Z}$-graded ring structure $(*)$ of $QH^*(G/B)$.)

By Proposition \ref{propgradingcomp}, we have
 $$\overline{\sigma^{u'}\star \sigma^{s_\alpha}}=\overline{ \sigma^u}\in Gr^\mathcal{F}_{gr_\alpha(\sigma^u)}
     \mbox{ and  } \overline{\sigma^{v'}\star \sigma^{s_\alpha}}=\overline{ \sigma^v}\in Gr^\mathcal{F}_{gr_\alpha(\sigma^v)}.$$
  Note that  $QH^*(G/B)$ is an associative and commutative $\mathbb{Z}^2$-filtered algebra with respect to $\mathcal{F}$.
As a consequence,   $Gr^\mathcal{F}(QH^*(G/B))$ is an   associative and commutative $\mathbb{Z}^2$-graded algebra.
Thus we have
 $$\mbox{LHS}:=\overline{\sigma^{u'}\star \sigma^{s_\alpha}}\star \, \overline{\sigma^{v'}\star \sigma^{s_\alpha}} =\overline{ \sigma^u}\star
  \overline{ \sigma^v}=:\mbox{RHS}$$ in $Gr^\mathcal{F}_{gr_\alpha(\sigma^u)+gr_\alpha(\sigma^v)}$.
By Lemma \ref{lemmagradcomp}, we have
$$\mbox{RHS}= \overline{ \sigma^u \star   \sigma^v}= \overline{\sum N_{u, v}^{\tilde w, \tilde \lambda}q_{\tilde \lambda}\sigma^{\tilde w}}= \sum N_{u, v}^{\tilde w, \tilde \lambda}\overline{q_{\tilde \lambda}\sigma^{\tilde w}},$$
 where the summation is over those $(\tilde w, \tilde \lambda)\in W\times Q^\vee$ satisfying
  $\ell(\tilde w)+\langle 2\rho, \tilde \lambda\rangle=\ell(u)+\ell(v)$ and   $\mbox{sgn}_\alpha(\tilde w)+\langle \alpha, \tilde \lambda\rangle =2$.
Let $\star_\alpha$ denote the quantum product for $QH^*(G/P_\alpha)$.  By Proposition \ref{propfiberisomor}, we have
 $$ \mbox{LHS} = (\overline{\sigma^{u'}}\star  \overline{\sigma^{v'}})\star ( \overline{ \sigma^{s_\alpha}}\star  \overline{\sigma^{s_\alpha}})
               =  \Psi_{\scriptsize\mbox{hor}}^\alpha(\sigma^{u'}\star_\alpha\sigma^{v'})\star\, \overline{ q_{\alpha^\vee}}\\
              =\!\sum N_{u', v'}^{w', \lambda_{P_\alpha}}\overline{\psi_{\alpha}(\sigma^{w'}q_{\lambda_{P_\alpha}}) q_{\alpha^\vee}},
$$
the summation over those $(w', \lambda_{P_\alpha})\in W^P\times Q^\vee/Q^\vee_{P_\alpha}$ (with $\lambda_{P_\alpha}$ being effective).
Then we conclude $N_{u, v}^{w, \lambda}=N_{us_\alpha, vs_\alpha}^{w, \lambda-\alpha^\vee}$ by comparing coefficients of both sides.
 Indeed, for $\lambda_{P_\alpha}:=\lambda+Q^\vee_{P_\alpha}$, we have $\lambda_B=\lambda-\alpha^\vee$   via Peterson-Woodward comparison formula
 (by noting  $\langle \alpha, \lambda-\alpha^\vee\rangle=-\mbox{sgn}_\alpha(w)\in\{0, -1\}$).
 Set $w':=w$ if $\mbox{sgn}_\alpha(w)=0$, or $ws_\alpha$ if $\mbox{sgn}_\alpha(w)=1$.
 Note that     $\psi_{\alpha}(\sigma^{w'}q_{\lambda_{P_\alpha}})q_{\alpha^\vee}=\sigma^wq_{\lambda}$.
    We conclude
     $$N_{u, v}^{w, \lambda}=N_{u', v'}^{w', \lambda_{P_\alpha}}=N_{u',v'}^{w, \lambda-\alpha^\vee} .$$

Note that for any $\hat w\in W$, we have
         $$\overline{\sigma^{\hat w}}\star \, \overline{\sigma^{s_\alpha}}=\begin{cases}
              \overline{\sigma^{\hat ws_\alpha}},&\mbox{ if } \mbox{sgn}_\alpha(\hat w)=0\\
              \overline{\sigma^{\hat ws_\alpha}}\star\overline{\sigma^{s_\alpha}} \star \overline{\sigma^{s_\alpha}}
               =  \overline{\sigma^{\hat ws_\alpha}q_{\alpha^\vee}},&\mbox{ if } \mbox{sgn}_\alpha(\hat w)=1
         \end{cases}.$$  Hence, we have
   \begin{align*}
       \overline{ \sigma^u}\star\overline{ \sigma^v} =   \overline{\sigma^{u}}\star \, \overline{\sigma^{v'}\star \sigma^{s_\alpha}}
                                &=\overline{\sigma^{u} \star\, \sigma^{v'}}\star \overline{\sigma^{s_\alpha}}\\
                 &=\overline{\sum N_{u, v'}^{\hat w, \hat \lambda}q_{\hat  \lambda}\sigma^{\hat w}}\star \overline{\sigma^{s_\alpha}}\\
                 &=\sum N_{u, v'}^{\hat w, \hat \lambda}\overline{q_{\hat  \lambda}\sigma^{\hat w s_\alpha}}+
                   \sum N_{u, v'}^{\hat w, \hat \lambda}\overline{q_{\hat  \lambda +\alpha^\vee}\sigma^{\hat w s_\alpha}},
   \end{align*}
 the former (resp. latter) summation over those $(\hat w, \hat \lambda)\in W\times Q^\vee$ satisfying
$\ell(\hat w)+\langle 2\rho, \hat\lambda\rangle=\ell(u)+\ell(v')$,   $\mbox{sgn}_\alpha(\hat w)+\langle \alpha, \hat \lambda\rangle =1$
  and $\mbox{sgn}_\alpha(\hat w)=0$ (resp. $1$).
 Hence, if $\mbox{sgn}_\alpha(w)=0$ (resp. 1), then
   we have $N_{u, v}^{w, \lambda}q_{\lambda} \sigma^{w}=N_{u, v'}^{\hat w, \hat \lambda}  q_{\hat  \lambda +\alpha^\vee}\sigma^{\hat w s_\alpha}
      $ (resp. $N_{u, v'}^{\hat w, \hat \lambda}  q_{\hat  \lambda  }\sigma^{\hat w s_\alpha}$)
   for a unique $(\hat w, \hat \lambda)$ in the latter (resp. former) summation.
  Thus we conclude that
   $N_{u, v}^{w, \lambda}$ equals  $N_{u, vs_\alpha}^{ws_\alpha, \lambda-\alpha^\vee}$ if    $\mbox{sgn}_\alpha(w)=0$,
        or $N_{u, vs_\alpha}^{ws_\alpha, \lambda}$  if   $\mbox{sgn}_\alpha(w)=1  .$

\subsection{Applications}\label{subapplication}
  In this subsection, we   give   applications of Theorem \ref{mainthm} for $\Delta$ of $A$-type case.
 (See the introduction for possible further applications for other cases.) 
 For convenience, we assume the Dynkin diagram of $\Delta$ is given by
 \begin{tabular}{l} \raisebox{-0.4ex}[0pt]{$ \circ\;\!\!\!\!-\!\!\!-\!\!\!-\!\!\!\!\;\circ \cdots \circ\;\!\!\!\!-\!\!\!-\!\!\!-\!\!\! \;  \circ$}\\
                 \raisebox{0.60ex}[0pt]{${\scriptstyle{\alpha_1}\hspace{0.3cm}\alpha_2\hspace{1.0cm}\alpha_n} $}  \end{tabular}.
\noindent The flag variety 
  of $A_n$-type, corresponding to 
 a subset
  $
    \Delta\setminus \{\alpha_{a_1}, \cdots, \alpha_{a_k}\}$,
   parameterizes flag of linear    subspaces
  $\{V_{a_1}\leqslant \cdots \leqslant V_{a_r}\leqslant \mathbb{C}^{n+1}~|~ \dim_\mathbb{C} V_{a_j}=a_j, j=1, \cdots, r\}$
   where $[a_1,  \cdots, a_r]$ is a  subsequence  of $[1,   \cdots, n]$.  
We fix an $\alpha_k$ once and for all. Let $P\supset B$ denote the parabolic subgroup that corresponds to $\Delta_P=\Delta\setminus \{\alpha_k\}$.
Note that  the complete flag variety $F\ell_{n+1}=G/B$ and the complex Grassmannian $Gr(k, n+1)=G/P$  correspond to the
   subsequences $[1, 2, \cdots, n]$ and $[k]$ respectively, where $G=SL(n+1, \mathbb{C})$.
   The natural projection $\pi: G/B\rightarrow G/P$ is just the  forgetting map, sending a flag
       $ V_{1}\leqslant \cdots \leqslant V_{n}\leqslant \mathbb{C}^{n+1}$
       in   $F\ell_{n+1}$ to the point $V_k\leqslant \mathbb{C}^{n+1}$ in $Gr(k, n+1)$.
Furthermore, the induced map  $\pi^*: H^*(G/P)\rightarrow H^*(G/B)$ sends  a Schubert class
   $\sigma^{w}_P \in H^*(G/P)$ (where $w\in W^P$) to the Schubert class
      $\pi^*(\sigma^{w}_P)=\sigma^{w}_B \in H^*(G/B)$. Such a class $\sigma^w$ in $H^*(G/B)$ (with $w\in W^P$) is called a  \textit{Grassmannian class}.
 By abuse of notations, we skip the subscript ``$B$" and ``$P$".
  Using Theorem \ref{mainthm}, we can show Theorem \ref{mainapplication} stated in the introduction, a reformulation of which is given as follows.
  \bigskip

 \noindent\textbf{Theorem \ref{mainapplication}. }
  {\itshape   For any $u\in W^P$,   $v, w\in W$ and   $\lambda \in Q^\vee$, there exist  $v', w'\in W$ such that
     $$N_{u, v}^{w, \lambda}=N_{u, v'}^{w',\, 0}.$$
 }
To prove   the above theorem, we need the following two lemmas.

\begin{lemma}\label{lemmapositivecoe}
   Given any nonzero  $\lambda =\sum_{j=1}^na_j\alpha_j^\vee \in Q^\vee$ with $a_j\geq 0$ for all $j$,
     there exists $m\in \{1, \cdots, n\}$ such that    $\langle \alpha_m, \lambda\rangle >0$ and     $a_m>0$.
\end{lemma}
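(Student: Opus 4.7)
The plan is to exploit the explicit form of the Cartan pairing in type $A_n$. With the standard conventions $a_0 := 0 =: a_{n+1}$, one has
\[
\langle \alpha_m, \lambda\rangle \;=\; 2a_m - a_{m-1} - a_{m+1}
\]
for every $m\in\{1,\dots,n\}$, since $\langle \alpha_m,\alpha_m^\vee\rangle=2$, $\langle \alpha_m,\alpha_j^\vee\rangle=-1$ when $|m-j|=1$, and $0$ otherwise. So the task reduces to producing an index $m$ at which $a_m>0$ and $2a_m>a_{m-1}+a_{m+1}$.

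The key idea is to choose $m$ where $a_m$ attains its maximum, breaking ties to the left. Concretely, I would let $M:=\max\{a_j \mid 1\le j\le n\}$. Since $\lambda\neq 0$ and all $a_j$ are nonnegative integers, $M$ is a positive integer. Then take $m$ to be the smallest index with $a_m=M$. Automatically $a_m=M>0$, so the second condition $a_m>0$ of the lemma is met.

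It remains to check $\langle \alpha_m,\lambda\rangle>0$. By the minimality of $m$, if $m>1$ then $a_{m-1}<M$, and since the $a_j$ are integers this forces $a_{m-1}\le M-1$; if $m=1$ then $a_{m-1}=a_0=0\le M-1$ (as $M\ge 1$). In either case $a_{m-1}\le M-1$. Also $a_{m+1}\le M$ by definition of $M$ (with the convention $a_{n+1}=0$ when $m=n$). Therefore
\[
\langle \alpha_m,\lambda\rangle \;=\; 2M - a_{m-1} - a_{m+1} \;\ge\; 2M - (M-1) - M \;=\; 1 \;>\; 0.
\]

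There is no real obstacle here; the only subtlety is the boundary behaviour at $m=1$ and $m=n$, which is uniformly handled by the vanishing conventions $a_0=a_{n+1}=0$. The argument is specific to type $A_n$ only in its use of the tridiagonal Cartan matrix, which is exactly what the hypothesis of the subsection provides.
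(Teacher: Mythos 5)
Your argument is correct for the setting in which the lemma is stated (type $A_n$), but it proceeds quite differently from the paper. You exploit the tridiagonal Cartan matrix to write $\langle\alpha_m,\lambda\rangle=2a_m-a_{m-1}-a_{m+1}$ and then constructively produce $m$ as the leftmost index where $a_j$ attains its maximum $M\ge 1$, giving $\langle\alpha_m,\lambda\rangle\ge 2M-(M-1)-M=1$; the boundary conventions $a_0=a_{n+1}=0$ handle $m=1$ and $m=n$, and the estimate is sound. The paper instead argues by contradiction in a type-free way: if $\langle\alpha_m,\lambda\rangle\le 0$ for all $m$, then $\lambda$ is a non-positive combination of fundamental coweights, hence (since the inverse Cartan matrix has non-negative entries, cf.\ Table 1 in \S 13.2 of Humphreys) a non-positive combination of simple coroots, forcing $\lambda=0$; then $a_m>0$ follows because $\langle\alpha_m,\alpha_j^\vee\rangle$ is positive only for $j=m$ and non-positive otherwise. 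The trade-off: your proof is more elementary and explicitly exhibits the index $m$, but it is genuinely tied to the type $A$ Cartan matrix --- with off-diagonal entries $-2$ or $-3$ (types $B$, $C$, $F$, $G$) the ``leftmost maximum'' bound $2M-(M-1)-M>0$ no longer suffices as written --- whereas the paper's argument is uniform in the Lie type, which is exactly what its subsequent remark (that the lemma holds for all types with the same proof) relies on.
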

\begin{proof}
   Assume  $\langle \alpha_m, \lambda\rangle \leq 0$ for all $m$. Then $\lambda$ is a non-positive sum of fundamental coweights. As a consequence, $\lambda$ is
   a non-positive sum of simple coroots $\alpha_j^\vee$'s, by   Table 1 in section 13.2 of \cite{hum}. Thus $\lambda=0$, which contradicts the assumptions.

  Hence, there exists $m$ such that  $\langle \alpha_m, \lambda\rangle >0$.
  Consequently, we have $a_m>0$, by noting   that $\langle \alpha_m, \alpha_j^\vee\rangle $ is positive if $j=m$, or non-positive otherwise.
\end{proof}

\begin{remark}
  Lemma \ref{lemmapositivecoe} works for $\Delta$ of all types with the same proof.
\end{remark}

\begin{lemma}\label{lemmacoelargethan2}
   Let $\lambda =\sum_{j=1}^na_j\alpha_j^\vee \in Q^\vee$ with $a_j\geq 0$ for all $j$. If $\langle \alpha_m, \lambda\rangle >0$ for a unique
   $m$, then we have $\langle \alpha_m, \lambda\rangle \geq 2$.
\end{lemma}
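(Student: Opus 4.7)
The plan is to translate the inequalities $\langle \alpha_j, \lambda\rangle \leq 0$ (for $j\neq m$) into monotonicity of a first-difference sequence, and then use the non-negativity of the $a_j$'s to rule out $\langle \alpha_m, \lambda\rangle = 1$. Extend the notation by setting $a_0 := 0 =: a_{n+1}$. Using the type-$A_n$ Cartan matrix, one has $\langle \alpha_j, \lambda\rangle = 2a_j - a_{j-1} - a_{j+1}$ for $1 \leq j \leq n$. Introducing the first differences $d_j := a_j - a_{j-1}$ for $1 \leq j \leq n+1$, this rewrites as $\langle \alpha_j, \lambda\rangle = d_j - d_{j+1}$. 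The assumptions then become: $d_1 \leq d_2 \leq \cdots \leq d_m$, $d_{m+1} \leq d_{m+2} \leq \cdots \leq d_{n+1}$, together with $d_m > d_{m+1}$.

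Since each $\langle \alpha_j, \lambda\rangle$ is an integer, to show $\langle \alpha_m, \lambda\rangle \geq 2$ I will argue by contradiction and assume $d_m - d_{m+1} = 1$. The boundary values $a_0 = a_{n+1} = 0$ combined with $a_j \geq 0$ give $d_1 = a_1 \geq 0$ and $d_{n+1} = -a_n \leq 0$. Propagating through the two monotone chains yields $d_m \geq d_1 \geq 0$ and $d_{m+1} \leq d_{n+1} \leq 0$; together with $d_m = d_{m+1} + 1$ this pins down $d_m \in \{0, 1\}$.

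The contradiction will come from a two-case analysis. If $d_m = 0$ (so $d_{m+1} = -1$), then the chain $0 \leq d_1 \leq \cdots \leq d_m = 0$ collapses to $a_m = 0$, whence $a_{m+1} = a_m + d_{m+1} = -1 < 0$, contradicting $a_{m+1} \geq 0$. If $d_m = 1$ (so $d_{m+1} = 0$), then the chain $0 = d_{m+1} \leq \cdots \leq d_{n+1} \leq 0$ forces $d_j = 0$ for all $j \geq m+1$, so $a_m = a_{n+1} = 0$; but the non-decreasing non-negative integers $d_1 \leq \cdots \leq d_m$ summing to $a_m = 0$ cannot include $d_m = 1$. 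The only real obstacle is choosing the correct bookkeeping; once one passes from $\{a_j\}$ to the discrete derivative $\{d_j\}$, the problem reduces to a short exercise on monotone integer sequences with prescribed boundary behavior.
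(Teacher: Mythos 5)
Your proof is correct. It is, however, organized differently from the paper's. The paper first shows that the support $\Delta'=\{\alpha_j : a_j>0\}$ has connected Dynkin diagram (this step leans on Lemma~\ref{lemmapositivecoe} applied to each would-be component, which is why that lemma is stated first), identifies the support as an interval $\{\alpha_i,\dots,\alpha_p\}$ containing $\alpha_m$, and then propagates strict inequalities $a_i<a_{i+1}<\cdots<a_m$ inward from each endpoint of the support, reading off $\langle\alpha_m,\lambda\rangle=2a_m-a_{m-1}-a_{m+1}\geq 2$ directly. You instead pass to the discrete derivative $d_j=a_j-a_{j-1}$ (with $a_0=a_{n+1}=0$), so that the hypotheses become monotonicity of $(d_j)$ on $[1,m]$ and on $[m+1,n+1]$ with $d_1\geq 0\geq d_{n+1}$, and you rule out $\langle\alpha_m,\lambda\rangle=d_m-d_{m+1}=1$ by a two-case contradiction. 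Your route buys a cleaner argument that needs neither the connectedness of the support nor Lemma~\ref{lemmapositivecoe}; the degenerate cases $m=1$ and $m=n$ are also handled uniformly by the boundary convention. What it gives up is the extra structural information the paper's argument produces along the way (the coefficients strictly increase toward $\alpha_m$ on the support), and, being a proof by contradiction of the single excluded value $1$, it is slightly less direct. Both arguments are equally tied to the tridiagonal form of the type-$A_n$ Cartan matrix.
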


\begin{proof}
   Let $Dyn(\tilde \Delta)$ denote the Dynkin diagram associated to a subbase $\tilde\Delta\subset \Delta$.

  Denote $\Delta':=\{\alpha_j~|~ a_j>0\}$. Clearly, $\alpha_m\in \Delta'$.    We first conclude that   $Dyn(\Delta')$     is connected. (Otherwise, we can write
    $\Delta'=\Delta_1\sqcup \Delta_2$ with     $Dyn(\Delta_1)$ being a connected  component of $Dyn(\Delta')$.
  Then    $\lambda=\lambda_1+\lambda_2$ with  $\lambda_1$ (resp. $\lambda_2$) belonging to the coroot sub-lattice of
               $\Delta_1$ (resp. $\Delta_2$). Note that $\Delta_1$ and $\Delta_2$ are orthogonal to each other.
  For each $j\in \{1, 2\}$, there exists $\alpha_{m_j}\in \Delta_j$ such that $\langle \alpha_{m_j}, \lambda_j\rangle >0$ by Lemma \ref{lemmapositivecoe}.
  This contradicts the uniqueness of $\alpha_m$.) Thus $\Delta'=\{\alpha_i, \alpha_{i+1}, \cdots, \alpha_p\}$ for some
   $1\leq i\leq m\leq p\leq n$.

When $i=p$,   the statements holds, by noting that  $\langle \alpha_m, \alpha^\vee_m\rangle=2$ and   $\lambda=a_m\alpha_m^\vee$      in this case.
When   $i<p$, we can assume $i<m$ without loss of generality. Since $0\geq \langle \alpha_i, \lambda\rangle=2a_i-a_{i+1}$, we have $a_{i+1}\geq 2a_i>a_i>0$.
Since   $0\geq \langle \alpha_{i+1}, \lambda\rangle=-a_i+2a_{i+1}-a_{i+2}$,
 we have $a_{i+2}\geq a_{i+1}+(a_{i+1}-a_i)>a_{i+1}>0$. By induction, we conclude $a_{m}>a_{m-1}>0$.
 If $m=p$, then we have  $ \langle \alpha_m, \lambda\rangle=2a_m-a_{m-1}\geq 2(a_{m-1}+1)-a_{m-1}>2$.
If $m<p$, then we can show $a_m>a_{m+1}$ with the same arguments. As a consequence,
 we have  $ \langle \alpha_m, \lambda\rangle=-a_{m-1}+2a_m-a_{m+1}\geq -a_{m-1}+(a_{m-1}+1 +a_{m+1}+1)-a_{m+1}\geq 2$.
\end{proof}

 \bigskip
\vspace{-0.2cm}

\begin{proof}[Proof of Theorem \ref{mainapplication}]
 Clearly, the statement holds  if   $N_{u, v}^{w, \lambda}$ vanishes or $\lambda=0$.

Given nonzero $\lambda =\sum_{j=1}^na_j\alpha_j^\vee \in Q^\vee$, we can assume $a_j\geq 0$ for all $j$, i.e.
 $\lambda$ is effective,
because  otherwise  $N_{u, v}^{w, \lambda}$ vanishes.  Since $\lambda\neq 0$,  there exists $m$ such that
   $\langle \alpha_m, \lambda\rangle >0$ by Lemma \ref{lemmapositivecoe}. We simply denote $\mbox{sgn}_{m}:=\mbox{sgn}_{\alpha_m}$, which is a map from $W$ to $\{0, 1\}$
   defined in the introduction (see also  section \ref{subZ2filtration}).

   If such an $m$ is not unique, then we can take any one such $m$ that is not equal to $k$.
    Since $u\in W^P$ where $\Delta_P=\Delta \setminus \{\alpha_k\}$,
   we have $\mbox{sgn}_m(u)=0$. If $\mbox{sgn}_m(v)<\mbox{sgn}_m(w)+\langle \alpha_m, \lambda \rangle$, then we have
   $N_{u, v}^{w, \lambda}=0$ by Theorem \ref{mainthm} (1);  and hence we are done. Otherwise, we have
     $\mbox{sgn}_m(v)=\mbox{sgn}_m(w)+\langle \alpha_m, \lambda \rangle=1$ and $\mbox{sgn}_m(w)=0$. By Theorem \ref{mainthm} (2),
     we have $N_{us_m, v}^{ws_m, \lambda}=N_{u, vs_m}^{ws_m, \lambda-\alpha_m^\vee}=N_{u, v}^{w, \lambda}$.

If such an $m$ is unique, then we have $\langle \alpha_m, \lambda\rangle \geq 2$ by Lemma \ref{lemmacoelargethan2}.
Thus either    $\mbox{sgn}_m(u)+\mbox{sgn}_m(v)<\mbox{sgn}_m(w)+\langle \alpha_m, \lambda \rangle$
or  $\mbox{sgn}_m(u)+\mbox{sgn}_m(v)=\mbox{sgn}_m(w)+\langle \alpha_m, \lambda \rangle$ holds.
For the former case,   $N_{u, v}^{w, \lambda}$ vanishes and then it is done. For the latter case,
  we conclude  $m=k$, $\mbox{sgn}_k(v)=1$, $\mbox{sgn}_k(w)=0$ and  $\langle \alpha_k, \lambda\rangle= 2$,
by noting that $\mbox{sgn}_j(u)=1$ if $j=k$, or $0$ otherwise.
Thus we have  $N_{u, v}^{w, \lambda}=N_{us_k, vs_k}^{w, \lambda-\alpha_k^\vee}=N_{u, vs_k}^{ws_k, \lambda-\alpha_k^\vee}$, by using
Theorem \ref{mainthm} (2) again.

Hence,   either of the followings must hold: (i)
   $N_{u, v}^{w, \lambda}=0$ (and then it is done); (ii)
       $N_{u, v}^{w, \lambda}=N_{u, vs_m}^{ws_m, \lambda'}$ with $\lambda'=\lambda-\alpha_m^\vee=\sum_{j}a_j'\alpha_j^\vee$,
       in which $|\lambda'|=|\lambda|-1$ with $a_j'=a_j-1\geq 0$ if $j=m$, or $a_j$ otherwise. Here $|\lambda|:=\sum_{j=1}^n a_j$.
Therefore, the statement holds, by using induction on $|\lambda|$.
\end{proof}

Besides Theorem \ref{mainapplication},
      we can also find other applications of Theorem \ref{mainthm}. 
\begin{example}
   Let $G/B=F\ell_4$. Take $u=v=s_2s_1s_2$, $w=s_2s_3$ and $\lambda=\alpha_1^\vee+\alpha_2^\vee$.
   (Note that neither of the Schubert classes $\sigma^u, \sigma^v$ are Grassmannian classes.)  We have
         $$N_{u, v}^{w, \lambda}=N_{u, vs_3}^{ws_3, \lambda+\alpha_3^\vee} =N_{s_2s_1s_2, s_2s_1s_2s_3}^{s_2, \alpha_1^\vee+\alpha_2^\vee+\alpha_3^\vee},$$
   in which we increase the degree   $q_\lambda$ first. Then we have
    $$N_{u, v}^{w, \lambda}=N_{s_2s_1 , s_2s_1s_2s_3}^{1, \alpha_1^\vee+\alpha_2^\vee+\alpha_3^\vee}
         =N_{s_2s_1, s_2s_1s_2}^{s_3, \alpha_1^\vee+\alpha_2^\vee}
         =N_{s_2s_1,  s_2s_1}^{s_3s_2, \alpha_1^\vee}
         =N_{s_2,  s_2}^{s_3s_2, 0}=1.$$
\end{example}
\noindent In fact, we already know all the nonzero three-pointed, genus
zero Gromov-Witten invariants for $F\ell_4$ are equal to $1$, by the multiplication table in \cite{fominGP}. Using Theorem \ref{mainthm}, we can find
their corresponding classical intersection numbers,  the most complicated case of which has been given in the above example.

  The proof of Theorem \ref{mainapplication} has also shown us how to find $v'$ and $w'$.
  Combining Theorem \ref{mainapplication} and
the  Peterson-Woodward comparison formula (Proposition \ref{comparison}),  we can obtain many nice applications, including alternative proofs of
  both  the quantum Pieri rule for all flag varieties of $A$-type given by Ciocan-Fontanine  in \cite{CFon} and the
  result that  any three-pointed genus zero Gromov-Witten invariant on a complex Grassmannian
   is a classical intersection number on a two-step flag variety of the same type, which is the central theme of \cite{BKT1} for type $A$ case by Buch, Kresch and Tamvakis.
In order to illustrate this clearly, we will show how to recover the ``quantum to classical" principle for complex Grassmannians  in the rest.

For the complex Grassmannian $X=G/P=Gr(k, n+1)$, we  note   $H_2(X, \mathbb{Z})\cong Q^\vee/Q^\vee_P\cong \mathbb{Z}$,
  so that we simply denote
 $N_{u, v}^{w, d}:=N_{u, v}^{w, \lambda_P}$ where $u, v, w\in W^P$ and $\lambda_P=d\alpha_k^\vee+Q^\vee_P$.
   Write
  $d=m_1k+r_1=m_2(n-k+1)+r_2$ where $1\leq r_1\leq k$ and $1\leq r_2\leq n-k+1$.
  Then  for $\lambda:=m_1\sum_{j=1}^{k-1}j\alpha_{j}^{\vee}+\sum_{j=1}^{r_1-1}%
j\alpha_{k-r_1+j}^{\vee}+ d\alpha_k^\vee+ m_2\sum_{j=1}^{n-k+1}j\alpha_{n+1-j}^{\vee}+\sum_{j=1}^{r_2-1}j\alpha_{k+r_2-j}^{\vee}$,
 we have     $\langle \alpha_i, \lambda\rangle=-1$ if $i\in \{k-r_1, k+r_2\}$, or $0$ otherwise.
  Thus it follows directly from the uniqueness of $\lambda_B$ 
  that  $\lambda_B=\lambda$.
 Furthermore by Proposition \ref{comparison}, we have
   $N_{u, v}^{w, d}=N_{u, v}^{\tilde w, \lambda_B} $ with
   $$\tilde w=w\omega_P\omega_{P'}=wu_{k-r_1}^{(k-1)}u_{k-r_1}^{(k-2)}\cdots u_{k-r_1}^{(k-r_1)}  v_{n+1-k-r_2}^{(n-r_2+1)}v_{n+1-k-r_2}^{(n-r_2+2)}\cdots v_{n+1-k-r_2}^{(n)}$$ (see e.g. Lemma 3.6 of \cite{leungli33} for the way of obtaining $\omega_P\omega_{P'}$).
  Here  for any $1\leq i\leq m$, we  denote $u_i^{(m)}:=s_{m-i+1}\cdots s_{m-1}s_m$ and $v^{(m)}_i=\big(u^{(m)}_i\big)^{-1}=s_ms_{m-1}\cdots s_{m-i+1}$; in addition,
 we denote  $u_0^{(m)}=v_0^{(m)}=\mbox{id}$.

   In particular, if    $1\leq d \leq \min\{k, n+1-k\}$, then we have $d=r_1=r_2$ and  $\Delta_{P'}=\Delta_P\setminus \{\alpha_{k-d}, \alpha_{k+d}\}$.
  Furthermore in this case, we go through the proof of Theorem \ref{mainapplication}
    for the above special $\lambda_B$, by reducing it to the zero coroot according to the ordering
 $\big((\alpha_k^\vee, \alpha_{k-1}^\vee, \cdots, \alpha_{k-d+1}^\vee),  (\alpha_{k+1}^\vee, \alpha_{k+2}^\vee, \cdots, \alpha_{k+d-1}^\vee), (\alpha_{k}^\vee, \cdots,
  \alpha_{k-d+2}^\vee),$ $(\alpha_{k+1}^\vee, \cdots, \alpha_{k+d-2}^\vee), \cdots, (\alpha_{k}^\vee, \alpha_{k-1}^\vee), (\alpha_{k+1}^\vee), \alpha_k^\vee\big)$.
  Correspondingly, we denote
   $$x:=v_d^{(k)}u_{d-1}^{k+d-1}v_{d-1}^{(k)}u_{d-2}^{(k+d-2)}\cdots v_{2}^{(k)}u_1^{(k+1)}s_k.$$
\noindent (Note $\ell(x)=d^2$.) As a direct  consequence, we have

 \begin{cor}\label{corofappforGrass}
    For any $u, v, w\in W^P$ and $d\in \mathbb{Z}$ with  $1\leq d \leq \min\{k, n+1-k\}$, we have
      $N_{u, v}^{w, d}=N_{u, vx}^{\tilde w x, 0}$,
      provided that $\ell(vx)=\ell(v)-\ell(x)$ and $\ell(\tilde wx)=\ell(\tilde w)+\ell(x)$, and zero otherwise.
  \end{cor}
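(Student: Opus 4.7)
The plan is to combine the Peterson-Woodward comparison formula with the iterative reduction from the proof of Theorem \ref{mainapplication}, applied to the explicit coroot $\lambda_B$ constructed in the paragraph preceding the statement.

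First I would invoke Proposition \ref{comparison} to rewrite $N_{u,v}^{w,d}$ as $N_{u,v}^{\tilde w, \lambda_B}$ in $QH^*(G/B)$. In the range $1 \leq d \leq \min\{k, n+1-k\}$, the coroot $\lambda_B$ has the symmetric triangular profile $(\ldots, 0, 1, 2, \ldots, d-1, d, d-1, \ldots, 2, 1, 0, \ldots)$ of simple coroot coefficients centred at $\alpha_k^\vee$, and $\tilde w = w \omega_P \omega_{P'}$. I would then run the reduction procedure of Theorem \ref{mainapplication}, peeling off one coroot at a time in the prescribed sequence. The first step subtracts $\alpha_k^\vee$ under the ``unique $m$'' clause (valid since $\langle \alpha_j, \lambda_B\rangle > 0$ only at $j = k$, with value $2$); every subsequent step falls under the ``not unique $m$'' clause, for which we choose some $m \neq k$ and the hypothesis $\mbox{sgn}_m(u) = 0$ is automatic since $u \in W^P$. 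Each reduction replaces $(u, v_j, \tilde w_j, \lambda_j)$ by $(u, v_j s_m, \tilde w_j s_m, \lambda_j - \alpha_m^\vee)$ with $u$ fixed; after all $d^2$ steps, the accumulated right multiplication on $v$ (and on $\tilde w$) is precisely the element $x$ in the statement.

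The main obstacle is parallel bookkeeping on two fronts. First, one must verify that at each stage the prescribed $\alpha_m$ actually satisfies $\langle \alpha_m, \lambda_j\rangle > 0$; this follows by an easy induction from the triangular profile of $\lambda_B$, since each subtraction increments the pairings at the neighbouring simple roots and opens up exactly the next position in the prescribed sequence while zeroing out the pairing at the position just handled. Second, one must check that the sign conditions $\mbox{sgn}_m(v_j) = 1$ and $\mbox{sgn}_m(\tilde w_j) = 0$ required at each step are equivalent to the length hypotheses of the corollary: since $x$ is a product of $d^2$ simple reflections in the prescribed order, $\ell(vx) = \ell(v) - \ell(x)$ forces each $v_j s_{m_j}$ to be shorter than $v_j$, while $\ell(\tilde w x) = \ell(\tilde w) + \ell(x)$ forces each $\tilde w_j s_{m_j}$ to be longer than $\tilde w_j$. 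If either hypothesis fails at some step $j$, then $\mbox{sgn}_{m_j}(\tilde w_j) + \langle \alpha_{m_j}, \lambda_j\rangle > \mbox{sgn}_{m_j}(u) + \mbox{sgn}_{m_j}(v_j)$, so Theorem \ref{mainthm}(1) forces $N_{u, v_j}^{\tilde w_j, \lambda_j} = 0$ and hence $N_{u,v}^{w,d} = 0$, matching the ``zero otherwise'' clause.
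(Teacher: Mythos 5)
Your overall route is exactly the paper's: the paper obtains the corollary by applying Proposition \ref{comparison} to write $N_{u,v}^{w,d}=N_{u,v}^{\tilde w,\lambda_B}$ and then running the reduction from the proof of Theorem \ref{mainapplication} on the triangular coroot $\lambda_B$ in the prescribed order, so that the accumulated right factor on $v$ and on $\tilde w$ is $x$ and the step-wise sign conditions assemble into the two length hypotheses (the paper states this only as ``a direct consequence''). The two verification points you isolate --- validity of the prescribed ordering of pairings, and the translation of the step-wise sign conditions into the global length conditions, with Theorem \ref{mainthm}(1) supplying the ``zero otherwise'' clause --- are precisely what the paper leaves implicit, and your treatment of the second point is correct.

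There is, however, one concrete misstatement in your bookkeeping: it is not true that only the first step falls under the ``unique $m$'' clause. The coroot $\alpha_k^\vee$ occurs $d$ times in the prescribed ordering (at the head of each block $(\alpha_k^\vee,\dots)$ and once at the very end), and at each such step the current degree $\lambda_j$ is again a triangular profile centred at $\alpha_k^\vee$ of smaller width, so that $\alpha_k$ is the \emph{unique} simple root with $\langle\alpha_k,\lambda_j\rangle>0$ (with value $2$); at those steps there is no admissible $m\neq k$, and $\mbox{sgn}_k(u)=1$ rather than $0$. Taken literally, your recipe could not be executed at, e.g., the final step. The error is harmless because both clauses of the proof of Theorem \ref{mainapplication} produce the identical dichotomy: either $\mbox{sgn}_{m_j}(v_j)=1$ and $\mbox{sgn}_{m_j}(\tilde w_j)=0$, in which case Theorem \ref{mainthm}(2) yields $N_{u,v_j}^{\tilde w_j,\lambda_j}=N_{u,\,v_js_{m_j}}^{\tilde w_js_{m_j},\,\lambda_j-\alpha_{m_j}^\vee}$, or else Theorem \ref{mainthm}(1) forces vanishing (using $\langle\alpha_{m_j},\lambda_j\rangle=1$ and $\mbox{sgn}_{m_j}(u)=0$ when $m_j\neq k$, versus $\langle\alpha_k,\lambda_j\rangle=2$ and $\mbox{sgn}_k(u)=1$ when $m_j=k$). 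Your induction on the pairings should therefore be restated to track both cases --- after one full round of subtractions the pairing at $\alpha_k$ returns to $2$ and all others are nonpositive --- after which the argument closes exactly as you describe.
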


Let $\bar P\supset B$ denote the parabolic subgroup that corresponds to the subset $\Delta\setminus \{\alpha_{k-d}, \alpha_{k+d}\}$.
 That is, $G/\bar P=F\ell_{k-d, k+d; n+1}=\{V\leqslant V'\leqslant \mathbb{C}^{n+1}~|~ \dim V=k-d, \dim V'=k+d\}$ is a two-step flag variety. We can reprove the next result of Buch, Kresch and Tamvakis.

\begin{prop}[Corollary 1 of \cite{BKT1}]\label{propQtoCforGrass}
 For any Schubert classes $\sigma^{u}, \sigma^{v}, \sigma^{w}$   in $H^*(Gr(k, n+1), \mathbb{Z})$ and any $d\geq 1$,
     the Gromov-Witten invariant $N_{u, v}^{w, d}$ coincides with
      the classical intersection number $N_{ux, vx}^{\tilde w, 0}$ for $\sigma^{ux}\cup \sigma^{vx}$ in $H^*(F\ell_{k-d, k+d; n+1}, \mathbb{Z})$,
       provided that  $d\leq \min\{k, n+1-k\}$, $\ell(ux)=\ell(u)-\ell(x)$,    $\ell(vx)=\ell(v)-\ell(x)$ and $\tilde w\in W^{\bar P}$, and vanishes otherwise.
\end{prop}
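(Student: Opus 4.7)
The plan is to derive Proposition~\ref{propQtoCforGrass} from Corollary~\ref{corofappforGrass} by transferring the resulting classical intersection number from $H^*(G/B)$ to $H^*(G/\bar P)$ via the projection $\pi: G/B\to G/\bar P$.

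First I observe that $x$ lies in the parabolic subgroup $W_{\bar P}$: indeed, $x$ is a product of the simple reflections $s_{k-d+1},\ldots,s_{k+d-1}$, whose indices all lie in $\Delta_{\bar P}=\Delta\setminus\{\alpha_{k-d},\alpha_{k+d}\}$. Hence for $\tilde w\in W^{\bar P}$, the length-additivity of the parabolic decomposition $W=W^{\bar P}\cdot W_{\bar P}$ yields $\ell(\tilde wx)=\ell(\tilde w)+\ell(x)$ automatically, and Corollary~\ref{corofappforGrass} applies to give $N_{u,v}^{w,d}=N_{u,vx}^{\tilde wx,0}$ in $H^*(G/B)$. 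Moreover, the hypothesis $\ell(ux)=\ell(u)-\ell(x)$ forces $u=(ux)\cdot x^{-1}$ to be the unique parabolic decomposition with $ux\in W^{\bar P}$, and likewise $vx\in W^{\bar P}$.

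The key step is to then establish the classical identity
\[ N_{u,vx}^{\tilde wx,0}\text{ in }H^*(G/B)\;=\;N_{ux,vx}^{\tilde w,0}\text{ in }H^*(G/\bar P). \]
I would do so by combining the projection formula for $\pi$ with a Leray--Hirsch-type decomposition of Schubert classes for the fibration $\bar P/B\to G/B\to G/\bar P$. Because $vx,\tilde w\in W^{\bar P}$, the classes $\sigma^{vx}_B,\sigma^{\tilde w}_B$ are pullbacks via $\pi^*$; in turn, the classes $\sigma^u_B$ and $\sigma^{\tilde wx}_B$ admit decompositions of the form $\pi^*(\sigma^{ux}_{\bar P})\cdot\xi_{x^{-1}}$ and $\pi^*(\sigma^{\tilde w}_{\bar P})\cdot\xi_x$ respectively, with $\xi_{x^{\pm 1}}$ supported on the fibers and normalized so that $\int_{\bar P/B}\xi_x\cdot\xi_{x^{-1}}=1$. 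The triple intersection on $G/B$ then reduces via the projection formula to the triple intersection on $G/\bar P$, giving the desired equality. The vanishing case (when a length hypothesis or the condition $\tilde w\in W^{\bar P}$ fails) follows either from the vanishing in Corollary~\ref{corofappforGrass} or from Theorem~\ref{mainthm}(1) applied at an appropriate step.

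The main obstacle will be making the Leray--Hirsch decomposition of $\sigma^u_B$ and $\sigma^{\tilde wx}_B$ precise within the combinatorial framework of the paper, in particular identifying the fiber classes $\xi_{x^{\pm 1}}$ explicitly and verifying their normalization. A plausible alternative is to re-run the proof of Theorem~\ref{mainapplication} using the first equality in Theorem~\ref{mainthm}(2), which modifies $u,v$ symmetrically rather than $v,\tilde w$ at each reduction; the resulting identity $N_{u,v}^{w,d}=N_{ux,vx}^{\tilde w,0}$ would hold already in $H^*(G/B)$ and then transfer to $H^*(G/\bar P)$ automatically, since all three indices lie in $W^{\bar P}$ and $\pi^*$ preserves classical structure constants for such triples. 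Either approach requires careful bookkeeping of signum functions and length conditions at each step.
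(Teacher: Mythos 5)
Your setup agrees with the paper: $x$ lies in $W_{\bar P}$, the condition $\ell(\tilde wx)=\ell(\tilde w)+\ell(x)$ is automatic for $\tilde w\in W^{\bar P}$, and the starting point is Corollary \ref{corofappforGrass} together with Lemma \ref{lemmavanifordegreelar} for the vanishing when $d>\min\{k,n+1-k\}$. The genuine gap is in your key step, the passage from $N_{u,vx}^{\tilde wx,0}$ to $N_{ux,vx}^{\tilde w,0}$. The factorization you posit, $\sigma^u_B=\pi^*(\sigma^{ux}_{\bar P})\cdot\xi_{x^{-1}}$ with $\xi_{x^{-1}}$ a class ``supported on the fibers,'' is false in general: Leray--Hirsch gives a module splitting of $H^*(G/B)$ over $H^*(G/\bar P)$, not a ring splitting, and a Schubert class indexed by $u=(ux)\cdot x$ is not the product of $\pi^*(\sigma^{ux})$ with any natural fiber Schubert class. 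Already in $F\ell_3$ with $\Delta_{\bar P}=\{\alpha_2\}$ one has $\sigma^{s_1}\cup\sigma^{s_2}=\sigma^{s_1s_2}+\sigma^{s_2s_1}$, so $\sigma^{s_1s_2}=\pi^*(\sigma^{s_1})\cup(\sigma^{s_2}-\sigma^{s_1})$ and the required ``fiber class'' is not a Schubert class and has no canonical normalization; the projection-formula reduction you describe therefore does not go through as stated. A second, smaller gap: $\ell(ux)=\ell(u)-\ell(x)$ with $x\in W_{\bar P}$ does \emph{not} by itself force $ux\in W^{\bar P}$ (length-additivity of one factorization $u=(ux)\cdot x$ does not identify $ux$ with the minimal coset representative); this requires the explicit root computation of Lemma \ref{lemmafortwostepele}, applied to $u$ as well as to $v$.

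The paper avoids geometry entirely at this step. Since $vx\in W^{\bar P}$ and $x$ is a product of $s_j$ with $j\in\{k-d+1,\dots,k+d-1\}$, one has $\mbox{sgn}_j(vx)=0$ for all such $j$; Theorem \ref{mainthm}(2), read in the classical direction (the chain $N_{as_j,bs_j}^{c,0}=N_{a,b}^{c,\alpha_j^\vee}=N_{a,bs_j}^{cs_j,0}$ when $\mbox{sgn}_j(a)=\mbox{sgn}_j(b)=1$ and $\mbox{sgn}_j(c)=0$), then moves $x$ letter by letter from the third slot to the first, giving $N_{u,vx}^{\tilde wx,0}=N_{ux^{-1},vx}^{\tilde w,0}=N_{ux,vx}^{\tilde w,0}$ entirely within $H^*(G/B)$, with Theorem \ref{mainthm}(1) supplying the vanishing when a length condition fails. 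Once all three indices lie in $W^{\bar P}$, the identification with the structure constant of $H^*(F\ell_{k-d,k+d;n+1})$ is immediate because $\pi^*$ is an injective ring homomorphism sending Schubert classes to Schubert classes. Your sketched ``alternative'' (re-running the reduction with the symmetric equality of Theorem \ref{mainthm}(2)) is much closer in spirit to this, but it is not carried out, and verifying the signum and degree conditions at each step of the reduction ordering is exactly the content that is missing.
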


To show the above proposition, we need the next two lemmas.
\begin{lemma}\label{lemmavanifordegreelar}
 For any Schubert classes $\sigma^{u}, \sigma^{v}, \sigma^{w}$   in $H^*(Gr(k, n+1), \mathbb{Z})$,
     the Gromov-Witten invariant $N_{u, v}^{w, d}$ vanishes unless $0\leq d\leq \min\{k, n+1-k\}$.
\end{lemma}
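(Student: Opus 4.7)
The plan is to combine the Peterson-Woodward comparison formula with the first main theorem and extract vanishing from a single inequality at the simple root $\alpha_k$. For $d<0$ there is nothing to prove: the class $d\alpha_k^\vee+Q_P^\vee$ is not effective, so the moduli space $\overline{\mathcal{M}}_{0,3}(Gr(k,n+1),d)$ is empty and $N_{u,v}^{w,d}=0$ trivially. The substantive case is $d>\min\{k,n+1-k\}$.

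For this case I would apply Proposition \ref{comparison} to rewrite $N_{u,v}^{w,d}=N_{u,v}^{w\omega_P\omega_{P'},\,\lambda_B}$, a Gromov-Witten invariant on $G/B$, where $\lambda_B$ is the explicit Peterson-Woodward lift of $d\alpha_k^\vee+Q_P^\vee$ displayed just before Corollary \ref{corofappforGrass}. Writing $d=m_1k+r_1=m_2(n-k+1)+r_2$ as there, a direct $A_n$ Cartan-matrix calculation then yields
\[
\langle\alpha_k,\lambda_B\rangle=m_1+m_2+2.
\]
Only the coefficients of $\alpha_{k-1}^\vee,\alpha_k^\vee,\alpha_{k+1}^\vee$ in $\lambda_B$ contribute to this pairing, and these telescope via the identities $m_1(k-1)+(r_1-1)=d-m_1-1$ and $m_2(n-k)+(r_2-1)=d-m_2-1$, so that all $d$'s cancel in $2d-(d-m_1-1)-(d-m_2-1)$.

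Since $d>\min\{k,n+1-k\}$ forces at least one of $m_1,m_2$ to be positive, we obtain $\langle\alpha_k,\lambda_B\rangle\geq 3$. I would then invoke Theorem \ref{mainthm}(1) at the single simple root $\alpha=\alpha_k$: the necessary condition
\[
\mbox{sgn}_{\alpha_k}(w\omega_P\omega_{P'})+\langle\alpha_k,\lambda_B\rangle \;\leq\; \mbox{sgn}_{\alpha_k}(u)+\mbox{sgn}_{\alpha_k}(v)
\]
has left-hand side at least $3$ but right-hand side at most $2$, a contradiction. Hence $N_{u,v}^{w\omega_P\omega_{P'},\lambda_B}=0$, and the Peterson-Woodward comparison yields $N_{u,v}^{w,d}=0$. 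The only potentially delicate point is verifying the telescoping identity $\langle\alpha_k,\lambda_B\rangle=m_1+m_2+2$; this is routine once written out, and the rest of the argument is immediate from the tools already established.
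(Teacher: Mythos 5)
Your proposal is correct and follows essentially the same route as the paper: lift via the Peterson--Woodward comparison formula, compute $\langle\alpha_k,\lambda_B\rangle$, and invoke Theorem \ref{mainthm}(1) at $\alpha_k$. The only cosmetic difference is that the paper treats $k=1$ and $k=n$ separately (obtaining $d+m_2+1$, resp.\ $d+m_1+1$), whereas your uniform formula $m_1+m_2+2$ covers those boundary cases as well, since there $r_1=1$ forces $m_1=d-1$ (resp.\ $r_2=1$ forces $m_2=d-1$) and the phantom coefficient of $\alpha_{k\mp1}^\vee$ vanishes.
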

 \begin{proof}
Note $N_{u, v}^{w, d}=N_{u, v}^{\tilde w, \lambda_B}$. 
    If $k=1$ (resp. $n$),
  then     $\langle \alpha_k, \lambda_B\rangle=d+m_2+1$ (resp. $d+m_1+1$) is larger than $2$, whenever $d>1=\min\{k, n+1-k\}$.
    Thus  we have $N_{u, v}^{\tilde w, \lambda_B}=0$ by Theorem \ref{mainthm} (1).
  If  $2\leq k\leq n-1$, then we have  $\langle \alpha_k, \lambda_B\rangle=m_1+m_2+2$.
  By Theorem \ref{mainthm} (1) again, we   have $N_{u, v}^{\tilde w, \lambda_B} =0$ unless
     $m_1=m_2=0$, in which case we still have   $d=r_1=r_2\leq   \min\{k, n+1-k\}$.
  \end{proof}

\begin{lemma}\label{lemmafortwostepele}
  For any $v\in W^P$, we have $vx\in W^{\bar P}$ if $\ell(vx)=\ell(v)-\ell(x)$.
\end{lemma}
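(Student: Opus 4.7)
The plan is to reduce the lemma to a combinatorial check on the one-line notation of permutations in $S_{n+1}$. First I would establish that $x$, as given by the stated product of simple reflections, coincides with the ``block-swap'' involution exchanging the interval $\{k-d+1,\ldots,k\}$ with $\{k+1,\ldots,k+d\}$ while preserving the internal order of each block; explicitly, $x(i)=i+d$ for $i\in\{k-d+1,\ldots,k\}$, $x(i)=i-d$ for $i\in\{k+1,\ldots,k+d\}$, and $x(i)=i$ otherwise. This can be verified by induction on $d$: writing $x_d=v_d^{(k)}u_{d-1}^{(k+d-1)}x_{d-1}$, where $x_{d-1}$ is (by the inductive hypothesis) the analogous block-swap interchanging $\{k-d+2,\ldots,k\}$ and $\{k+1,\ldots,k+d-1\}$, one checks directly that left-multiplication by $u_{d-1}^{(k+d-1)}$ and then by $v_d^{(k)}$ extends the swapped intervals by one position on each side. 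The length identity $\ell(x)=d^2$ confirms that the given expression is reduced.

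Once this is settled, the one-line notation of $vx$, via $(vx)(i)=v(x(i))$, is obtained by swapping the $v$-entries at positions $\{k-d+1,\ldots,k\}$ with those at positions $\{k+1,\ldots,k+d\}$, the remaining entries being unchanged. Since $v\in W^P$ satisfies $v(1)<\cdots<v(k)$ and $v(k+1)<\cdots<v(n+1)$, the one-line notation of $vx$ then consists of four strictly ascending runs whose boundaries sit at positions $k-d$, $k$, and $k+d$. Hence the descent set of $vx$ is automatically contained in $\{k-d,\,k,\,k+d\}$, and it remains only to rule out a descent at position $k$.

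Such a descent would require $v(k+d)>v(k-d+1)$, so I plan to derive the opposite inequality from the length hypothesis. A direct inversion count, in which the contributions from pairs not entirely inside the middle blocks cancel between $v$ and $vx$ thanks to the ascending structure of $v$, yields
\[
\ell(v)-\ell(vx)=2M-d^2,\qquad M:=\#\bigl\{(s,t)\in\{1,\ldots,d\}^2\mid v(k-d+s)>v(k+t)\bigr\}.
\]
The hypothesis $\ell(vx)=\ell(v)-d^2$ then forces $M=d^2$, i.e.\ $v(k-d+s)>v(k+t)$ for every pair $(s,t)$. Specializing to $(s,t)=(1,d)$ gives $v(k-d+1)>v(k+d)$, which excludes the descent at position $k$ and establishes $vx\in W^{\bar P}$.

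I expect the main obstacle to be the first step, the identification of $x$ with the block-swap permutation: the given expression nests ``downward'' strings $v_i^{(k)}$ and ``upward'' strings $u_i^{(k+i)}$ in a specific interleaving, so the induction requires careful bookkeeping of how the outer factors rearrange the one-line notation produced at the previous stage (the small cases $d=1,2,3$ make the pattern transparent). After this structural fact is in place, the remaining combinatorial analysis of descents and inversions is routine.
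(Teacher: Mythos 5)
Your proof is correct, but it takes a genuinely different route from the paper's. The paper never computes the full one-line form of $x$: it verifies the coset-minimality criterion root by root, showing $vx(\alpha_j)\in R^+$ for every $\alpha_j\in\Delta_{\bar P}$ directly from the chosen word for $x$ (for $j<k-d$ or $j>k+d$ one has $vx(\alpha_j)=v(\alpha_j)$, and for $j$ strictly between $k-d$ and $k+d$, $j\neq k$, the word sends $\alpha_j$ to a simple root of index $\neq k$, so positivity follows from $v\in W^P$), and it disposes of the one delicate root $\alpha_k$ by a short length argument: since the word for $x$ ends in $s_k$, the hypothesis $\ell(vx)=\ell(v)-\ell(x)$ forces $\ell(vxs_k)=\ell(vx)+1$, i.e. $vx(\alpha_k)\in R^+$. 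You instead identify $x$ globally as the block-swap permutation of $\{k-d+1,\dots,k\}$ and $\{k+1,\dots,k+d\}$ (your induction on $d$ is sound, and the count of $d^2$ inversions confirms reducedness), read off that the descents of $vx$ lie in $\{k-d,k,k+d\}$, and then use the inversion identity $\ell(v)-\ell(vx)=2M-d^2$ to force $M=d^2$, hence $v(k-d+1)>v(k+d)$ and no descent at $k$. Each approach buys something: the paper's use of the length hypothesis is lighter (a single length inequality, no need to know $x$ as a permutation), while yours extracts strictly more — that under the hypothesis all $d^2$ cross pairs are inversions of $v$, equivalently $vx$ is obtained from $v$ by sorting the middle $2d$ values — which is exactly the description recorded in Remark \ref{remdescuvw} and ties the lemma to the Buch--Kresch--Tamvakis picture. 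Two minor remarks: the cancellation of the mixed inversion pairs needs only that $x$ permutes the middle positions among themselves (the ascending structure of $v$ is not needed for that step), and the block-swap identification you flag as the main burden is precisely the information the paper extracts more locally from the reduced word, so the two arguments are of comparable total effort.
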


 \begin{proof}
    Since  $\ell(vx)=\ell(v)-\ell(x)$, we have $\ell(vx)=\ell(vxs_k)-1$, so that $vx(\alpha_k)\in R^+$.
    For any $j\in \{1, \cdots, k-d-1, k+d+1, \cdots, n\}$, we have $vx(\alpha_j)=v(\alpha_j)\in R^+$.
    For   $j\in \{k-d+1, \cdots, k-1\}$, we have
    $vx(\alpha_j)=vv_d^{(k)}u_{d-1}^{k+d-1} \cdots v_{k-j+1}^{(k)}u_{k-j}^{(2k-j)}v_{k-j}^{(k)}(\alpha_j)$
     $=vv_d^{(k)}u_{d-1}^{k+d-1} \cdots v_{k-j+2}^{(k)}u_{k-j+1}^{(2k-j+1)}(\alpha_{k+1}) =v(\alpha_{k+d})\in R^+.$
    Similarly for  $j\in \{k+1, \cdots, k+d-1\}$, we have
    $vx(\alpha_j)=v(\alpha_{j-d})\in R^+$.
 Hence, we have $vx\in W^{\bar P}$.
 \end{proof}

 \begin{remark}\label{remdescuvw}  The Weyl group $W$ for $G=SL(n+1, \mathbb{C})$ is canonically isomorphic to the permutation group $S_{n+1}$ by mapping each
  simple reflection $s_i\in W$ to the transposition $(i, i+1)\in S_{n+1}$.
   In ``one-line" notation,  each permutation $t\in W=S_{n+1}$ is  written as $(t(1), \cdots, t(n+1))$. In particular, Grassmannian permutations
     $t\in W^P$ for $G/P=Gr(k, n+1)$ are precisely the permutations with (at most) a single descent occurring at $k${\upshape  th} position (i.e., $t(k)>t(k+1)$).
    Permutations
     $t\in W^{\bar P}$ for $G/\bar P=F\ell_{k-d, k+d; n+1}$ are precisely the permutations with (at most) two   descents occurring at $(k-d)${\upshape  th} and $(k+d)${\upshape  th} position.
           With this characterization,  $vx\in W^{\bar P}$ is the element obtained from $v\in W^P$
     by sorting the values $\{v(k-d+1), \cdots, v(k+d)\}$  to be in increasing
order, which coincides with the descriptions in section 2.2 of \cite{BKT1}. (
          Indeed,  we note that $s_j(i)=i$ for any $j\in \{k-d+1,   \cdots, k+d-1\}$ and any
      $i\in \{1, \cdots, k-d, k+d+1, \cdots, n+1\}$.
   Thus we have $vx(i)=v(i)$ for any such $i$ and consequently
     the set  $\{v(k-d+1), \cdots, v(k+d)\}$ coincides with the set $\{vx(k-d+1), \cdots, vx(k+d)\}$.)
    Similarly, we can show that  $\tilde w$ is the permutation $(w(d+1), \cdots, w(k), \tilde w(k-d+1), \cdots, \tilde w(k+d),w(k+1),\cdots, w(n-d+1))$, in which $(\tilde w(k-d+1), \cdots, \tilde w(k+d))$ is  obtained from $w\in W^P$
     by sorting the values $\{w(1),\cdots, w(d), w(n-d+2), \cdots, w(n+1)\}$  to be in increasing
order.
 \end{remark}

\begin{proof}[Proof of Proposition \ref{propQtoCforGrass}]
   It follows from Lemma \ref{lemmavanifordegreelar} and Corollary \ref{corofappforGrass} that
     $N_{u, v}^{\tilde w, d}=N_{u, vx}^{\tilde wx ,0}$ if $1\leq d \leq \min\{k, n+1-k\}$, $\ell(vx)=\ell(v)-\ell(x)$ and $\ell(\tilde wx)=\ell(\tilde w)+\ell(x)$,
      or $0$ otherwise.
  When all of these hold, we have $vx\in W^{\bar P}$ by Lemma \ref{lemmafortwostepele} and  note that $x$ is in the Weyl subgroup generated by
    $\{s_{k-d+1}, \cdots, s_{k+d-1}\}$.
  In particular for any $j\in \{k-d+1, \cdots, k+d-1\}$, we have $\mbox{sgn}_j(vx)=0$. Thus
   by Theorem \ref{mainthm}, we have $N_{u, vx}^{\tilde wx, \, 0}=N_{ux^{-1}, vx}^{\tilde w, \, 0}$ provided that $\ell(ux^{-1})=\ell(u)-\ell(x^{-1})$, and zero otherwise. This assumption implies that $ux\in W^{\bar P}$, because of the observation that   $x=x^{-1}$.
   As a consequence,  $N_{ux, vx}^{\tilde w, \, 0}=0$ unless $\tilde w\in W^{\bar P}$, for which the assumption ``$\ell(\tilde wx)=\ell(\tilde w)+\ell(x)$" holds automatically.
\end{proof}

\section{Acknowledgements}
 The authors thank Ionut Ciocan-Fontanine, Bumsig Kim, Leonardo Constantin Mihalcea and Harry Tamvakis
   for useful discussions. We also   thank the referees for valuable suggestions.
 The work described in this paper was substantially    supported by a grant from the Research Grants Council of the Hong Kong Special Administrative Region, China (Project No. CUHK401908), and  partially by   the KRF grant 2007-341-C00006.

\end{document}